\newdimen\margin   
\def\COMMENT#1{}
\def\TASK#1{}
\newtheorem{theo}{Theorem} 
\newtheorem{prop}[theo]{Proposition}
\newtheorem{lemma}[theo]{Lemma}
\newcommand{\mc}[1]{\mathcal{#1}}
\newcommand{\mb}[1]{\mathbb{#1}}
\newcommand{\nib}[1]{\noindent {\bf #1}}
\newcommand{\lra}{\leftrightarrow}
\newcommand{\sm}{\setminus}
\newcommand{\ov}{\overline}
\newcommand{\eps}{\varepsilon}
\newcommand{\ns}[1]{\hspace{-2pt} #1 \hspace{-2pt}}
\def\C{\mathcal{C}}
\def\eul{\rm{e}}
\begin{document}
\date{} 
\title{An exact minimum degree condition for Hamilton cycles in oriented graphs}
\author{Peter Keevash, Daniela K\"{u}hn \and Deryk Osthus}
\thanks {P.~Keevash was partially supported by the EPSRC, grant no.~EP/E02162X/1
and the NSF, grant DMS-0555755.
D.~K\"uhn was partially supported by the EPSRC, grant no.~EP/F008406/1.
D.~Osthus was partially supported by the EPSRC, grant no.~EP/E02162X/1 and~EP/F008406/1.}
\begin{abstract}
We show that every sufficiently large oriented graph~$G$ with
$\delta^+(G),\delta^-(G) \ge  \frac{3n-4}{8}$ contains a Hamilton cycle.
This is best possible and solves a problem of Thomassen from 1979. 
\end{abstract}
\maketitle

\section{Introduction}

A central topic in graph theory is that of giving conditions under which a graph
is Hamiltonian. One such result is the classical theorem of Dirac~\cite{D}, which
states that any graph on $n\ge 3$ vertices with minimum degree
at least $n/2$ contains a Hamilton cycle. For an analogue in directed graphs it is
natural to consider the \emph{minimum semi-degree~$\delta^0(G)$} of a digraph $G$,
which is the minimum of its minimum outdegree~$\delta^+(G)$ and its minimum
indegree~$\delta^-(G)$. The corresponding result is a theorem of Ghouila-Houri
\cite{GhouilaHouri}, which states that any digraph on $n$ vertices with minimum semi-degree
at least $n/2$ contains a Hamilton cycle. (When referring to paths and cycles in directed
graphs we always mean that these are directed, without mentioning this explicitly.)
Both of these results are best possible.

In 1979 Thomassen \cite{thomassen_79_long_cycles_constraints}
raised the natural corresponding question of determining the minimum
semi-degree that forces a Hamilton cycle in
an \emph{oriented graph} (i.e.~in a directed graph that can be obtained from a (simple)
undirected graph by orienting its edges).
Over the years since the question was posed, a series of improving bounds were
obtained in \cite{thomassen_81_long_cycles, thomassen_82, HaggkvistHamilton,
HaggkvistThomasonHamilton}. Recently, Kelly, K\"uhn and Osthus~\cite{KKO} were able to obtain
an approximate solution. They proved that
an oriented graph on $n$ vertices with minimum semi-degree at least
$(3/8+o(1))n$ has a Hamilton cycle, which asymptotically matches a lower bound given by
H\"aggkvist \cite{HaggkvistHamilton}. In this paper we obtain the following result
which exactly matches the lower bound of H\"aggkvist, thus answering Thomassen's question
for large oriented graphs.
\begin{theo} \label{main}
There exists a number $n_0$ so that any oriented graph
$G$  on $n \ge n_0$ vertices with minimum
semi-degree $\delta^0(G) \ge \left\lceil \frac{3n-4}{8} \right\rceil$ contains a Hamilton cycle.
\end{theo}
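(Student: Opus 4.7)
The strategy is to combine the approximate theorem of Kelly, K\"uhn and Osthus \cite{KKO} with a stability analysis and a separate treatment of configurations close to H\"aggkvist's extremal construction. The proof splits into two main cases according to whether $G$ is close to an extremal example.

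\emph{Stability reduction.} The first step is to prove a stability version of the approximate result: if $G$ has $\delta^0(G) \ge (3/8-\eta)n$ but no Hamilton cycle, then $G$ is $\eps$-close (in edit distance) to one of a small family of explicit extremal configurations, each given by a rigid partition of $V(G)$ with a prescribed orientation pattern between the parts that forces an alternating structure in any long cycle. Fix constants $0 < 1/n_0 \ll \eta \ll \eps \ll 1$ and split the argument into the \emph{non-extremal case} (where $G$ is $\eps$-far from every extremal configuration) and the \emph{extremal case} (where $G$ is $\eps$-close to some extremal configuration).

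\emph{Non-extremal case.} Here I would apply the directed version of Szemer\'edi's regularity lemma to obtain a reduced digraph $R$ on constantly many clusters. Because $G$ is far from extremal, $R$ inherits minimum semi-degree at least $(3/8+\eta')|R|$ for some $\eta'>0$, which gives enough slack to find a Hamilton cycle in $R$ by the method of \cite{KKO} and to lift it through the clusters to a near-spanning cycle in $G$. To cover every vertex, I would reserve an \emph{absorbing path} at the start of the construction, designed so that any small leftover set can be absorbed into it one vertex at a time; building and using such a path is possible precisely because we have $\eta n$ of slack above $3n/8$.

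\emph{Extremal case.} In the extremal case $G$ contains an $\eps$-approximation of an explicit partition $V(G)=V_1\cup\dots\cup V_k$ with known edge orientation pattern. I would first pin down a canonical copy of this partition in $G$ by clustering vertices according to their in- and out-neighbourhoods, then identify the exceptional vertices whose neighbourhoods deviate significantly from the ideal pattern, and use the exact bound $\lceil(3n-4)/8\rceil$ to control both their number and the size of each deviation. Finally, I would construct a Hamilton cycle explicitly: cover the non-exceptional vertices with a system of paths respecting the canonical pattern (using tournament-type Hamilton path results inside parts and matching results across parts), and splice each exceptional vertex into this system along a short detour determined by its individual degree condition.

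\emph{Main obstacle.} The most difficult step will be the extremal case. The bound $\lceil(3n-4)/8\rceil$ leaves essentially no slack, so the degree condition must be used with full precision throughout; moreover, the stability conclusion locates the partition only up to an $\eps n$ error, and converting this into a partition rigid enough to support an explicit Hamilton cycle construction requires substantial cleaning. Handling the exceptional vertices is particularly delicate, since each one requires a carefully chosen detour and one must ensure that these detours do not interact. Formulating the stability statement so that it captures precisely the set of near-extremal configurations (and no more) is another source of technical difficulty that has to be addressed before the case split above is meaningful.
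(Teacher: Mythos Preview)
Your outline has the right two-case architecture, but the non-extremal case contains a genuine error. You write that ``because $G$ is far from extremal, $R$ inherits minimum semi-degree at least $(3/8+\eta')|R|$'' and later that absorbing works ``precisely because we have $\eta n$ of slack above $3n/8$''. Neither statement is true: the hypothesis is $\delta^0(G)\ge\lceil(3n-4)/8\rceil$, with no slack whatsoever, and being far from extremal does not raise the minimum semi-degree of either $G$ or $R$. What non-extremality actually buys is an \emph{expansion} property in the reduced graph, namely $|N^+_R(S)|\ge |S|+ck$ for all $S$ of intermediate size; the minimum semi-degree of $R$ is only $(3/8-O(d))|R|$. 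An absorbing argument built on nonexistent degree slack will not go through, and in fact the absorbing method in oriented graphs at threshold is not known to work here.

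The paper's route in the non-extremal case is substantially different from what you sketch. From expansion one gets a $1$-factor $\mathcal{C}$ in $R$, but the exceptional set $V_0$ from the regularity lemma is too large (size $\eps n$) to incorporate directly. The paper resolves this with a \emph{two-level} regularity argument: randomly split each cluster in half to form $A\cup B$, apply regularity a second time inside $A\cup V_0$ with much finer parameters to shrink the exceptional set to $\eps_0 n\ll \eps n$, and then find a Hamilton cycle in the $B$-side and a Hamilton path in the $A$-side that link into a Hamilton cycle of $G$. The embedding in each piece uses Csaba's version of the Blow-up Lemma together with $\mathcal{C}$-shifted walks to route through exceptional vertices --- no absorbing structure is built.

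Your extremal-case outline is closer in spirit, though the endgame differs: the paper does not build a Hamilton cycle from tournament-type results inside the parts. Instead it repeatedly contracts short $BD$-balanced paths to reduce to a picture where $|A|=|B|=|C|=|D|$ and all vertices are cyclic, then applies the standard Blow-up Lemma. The residual case where this reduction is blocked is squeezed down (via a maximum-matching argument on the ``wrong-way'' edges) until the exact bound $\lceil(3n-4)/8\rceil$ is violated, yielding a contradiction rather than a construction.
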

Note that Theorem~\ref{main} implies that every sufficiently large regular
tournament on~$n$ vertices contains at least $n/8$ edge-disjoint Hamilton cycles.
(To verify this, note that in a regular tournament, all in- and outdegrees are equal to $(n-1)/2$.
We can then greedily remove Hamilton cycles
as long as the degrees satisfy the condition in Theorem~\ref{main}.)%
\COMMENT{$(n-1)/2- \lceil (3n-4)/8  \rceil +1 \ge n/8$ }
This is a slight improvement on the $(1/8+o(1))n$ bound obtained in~\cite{KKO} by the same argument.
It is the best bound so far towards the classical conjecture of Kelly~(see e.g.~\cite{digraphsbook}), which states that
every regular tournament on~$n$ vertices can be
partitioned into~$(n-1)/2$ edge-disjoint Hamilton cycles.

H\"aggkvist~\cite{HaggkvistHamilton} also made the following conjecture which is closely related
to Theorem~\ref{main}. Given an oriented graph~$G$, let~$\delta(G)$ denote the minimum degree of~$G$
(i.e.~the minimum number of edges incident to a vertex) and set
$\delta^*(G):=\delta(G)+\delta^+(G)+\delta^-(G)$.
H\"aggkvist conjectured that if $\delta^*(G)>(3n-3)/2$, then $G$ has a Hamilton cycle.
(Note that this conjecture does not quite imply Theorem~\ref{main} as it results in a marginally
greater minimum semi-degree condition.)
In~\cite{KKO}, this conjecture was verified approximately, i.e.~if
$\delta^*(G) \ge (3/2+o(1))n$, then $G$ has a Hamilton cycle.
It seems possible that our approach can be extended to obtain an exact solution to this problem, but
this would certainly require some additional ideas beyond those applied here.

Our argument can be extended to find a cycle of any length~$\ell$, with $\ell \ge n/10^{10}$ (say)
through any given vertex.
We indicate the necessary modifications to the proof
in the final paragraph, the details can be found in~\cite{lukethesis}.
This result is used in~\cite{KKO2} to obtain the following pancyclicity result: any sufficiently large oriented
graph~$G$ with $\delta^0(G)\ge (3n-4)/8$ contains a cycle of length $\ell$ for all $\ell = 3,\dots ,n$.
For $\ell =4,\dots,n$, it even contains a cycle of length $\ell$ through a given vertex.

The rest of this paper is organized as follows. The next section contains some
basic notation. In Section~3 we describe the extremal example showing that
Theorem \ref{main} is best possible. We set up our main tools in Section~4, these
being a digraph form of the Regularity Lemma due to Alon and Shapira \cite{AlonShapiraTestingDigraphs}
and the Blow-up Lemma, both in the original form of Koml\'os, S\'ark\"ozy and
Szemer\'edi~\cite{KSSblowup} and in a recent stronger and more technical
form due to Csaba~\cite{csaba_06_bollobas_eldridge}. Our argument uses the
stability method, and so falls naturally into two cases, according to whether or not
our given oriented graph $G$ is structurally similar to the extremal example described
in Section~3. In Section~5 we prove a lemma that enables us to find
a Hamilton cycle when $G$ is not structurally similar to the extremal example.
The argument in this case is based on that of~\cite{KKO}.
Then we prove our main theorem in the final section.

\section{Notation}

Given two vertices~$x$ and~$y$ of
a directed graph~$G$, we write~$xy$ for the edge directed from~$x$ to~$y$.
The order~$|G|$ of~$G$ is the number of its vertices.
We write~$N^+_G(x)$ for the outneighbourhood of a vertex~$x$ and $d^+_G(x):=|N^+_G(x)|$ for its outdegree.
Similarly, we write~$N^-_G(x)$ for the inneighbourhood of~$x$ and $d^-_G(x):=|N^-_G(x)|$ for its indegree.
We write $N_G(x):=N^+_G(x)\cup N^-_G(x)$ for the neighbourhood of~$x$ and $d_G(x):=|N_G(x)|$ for its degree.
We use~$N^+(x)$ etc.~whenever
this is unambiguous. We write~$\Delta(G)$ for the maximum of $|N(x)|$ over all vertices $x\in G$.
Given a set~$A$ of vertices of~$G$, we write $N^+_G(A)$ for the set of all outneighbours of vertices in~$A$.
So~$N^+_G(A)$ is the union of $N^+_G(a)$ over all $a\in A$. $N^-_G(A)$ is defined similarly.
The directed subgraph of~$G$ induced by~$A$ is denoted by~$G[A]$ and we write~$E(A)$ for
the set of its edges and put $e(A):=|E(A)|$. If $S$ is a subset of the vertex set of $G$ then
$G-S$ denotes the digraph obtained from $G$ by deleting $S$ and all edges incident to $S$.

Recall that when referring to paths and cycles in directed graphs we always mean that
they are directed without mentioning this explicitly. Given two vertices~$x$ and~$y$ on a
directed cycle~$C$, we write $xCy$ for the subpath of~$C$ from $x$ to~$y$.
Similarly, given two vertices~$x$ and~$y$ on a directed path~$P$
such that~$x$ precedes~$y$, we write $xPy$ for the subpath of~$P$ from $x$ to~$y$.
A \emph{walk} in a directed graph~$G$ is a sequence of (not necessarily distinct)
vertices $v_1,v_2,\ldots,v_{\ell}$ where $v_iv_{i+1}$ is an edge for all $1\leq i<\ell$.
The walk is \emph{closed} if $v_1=v_\ell$. A \emph{$1$-factor} of~$G$ is a collection
of disjoint cycles which cover all the vertices of~$G$.
We define things similarly for undirected graphs.

Given two vertices $x,y$ of~$G$, an \emph{$x$-$y$ path} is a directed path which joins~$x$ to~$y$.
Given two disjoint subsets~$A$ and~$B$ of vertices of~$G$, an~$A$-$B$ edge is an edge~$ab$
where $a\in A$ and $b\in B$. We write $E(A,B)$ for the set of all these edges and
put $e(A,B):=|E(A,B)|$. We sometimes also write $e_G(A,B)$ for $e(A,B)$.
We denote by $(A,B)_G$ the oriented bipartite subgraph of~$G$
whose vertex classes are~$A$ and~$B$ and whose edge set is~$E(A,B)$.

We call an orientation of a complete graph a \emph{tournament} and an orientation of a
complete bipartite graph a \emph{bipartite tournament}.
Throughout the paper we omit floors and ceilings whenever this does not affect the
argument. For a positive integer $k$ we write $[k]:=\{1,\dots,k\}$.

\section{The extremal example}\label{extremal}

In this section, we show that the bound in Theorem~\ref{main} is best possible for every $n$.
The construction is due to H\"aggkvist~\cite{HaggkvistHamilton}. However, he only gives a
description for the case when $n$ is of the form $8k+7$. Below, we also add the remaining cases
to give the complete picture.

\begin{prop}\label{extremal_example}
For any $n\ge 3$ there is an oriented graph~$G$ on $n$ vertices
with minimum semi-degree~$\lceil(3n-4)/8\rceil -1$
which does not contain a $1$-factor, and so does not contain a Hamilton cycle.
\end{prop}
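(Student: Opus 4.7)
The plan is to exhibit, for each $n \ge 3$, an explicit oriented graph $G$ on $n$ vertices with $\delta^0(G)=\lceil(3n-4)/8\rceil-1$ that contains no $1$-factor; since any Hamilton cycle is a $1$-factor this suffices. To rule out a $1$-factor I will use the standard bipartite double cover reformulation: form the bipartite graph $H$ on vertex set $V\cup V'$ (a disjoint copy of $V$) with edge $uv'$ whenever $uv\in E(G)$. Then $G$ has a $1$-factor iff $H$ has a perfect matching, so by Hall's theorem it suffices to find $S\subseteq V$ with $|N^+_G(S)|<|S|$.

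I would first present H\"aggkvist's construction for the case $n=8k+7$, which gives the target value $\lceil(3n-4)/8\rceil-1=3k+2$. The construction partitions $V$ into a small number of blocks whose sizes are (up to additive constants) proportional to $n/4$ and $n/8$; between certain pairs of blocks all edges are directed one way (forming a complete bipartite oriented graph), while within each block either no edges are placed or a regular tournament is put down. Choosing the block sizes carefully makes the in- and out-degree of every vertex exactly equal to $3k+2$. The Hall violation is then obtained by taking $S$ to be one of the blocks (or a union of two of them) whose out-neighbourhood is forced to lie inside a strictly smaller block.

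For the remaining residues $n\bmod 8\in\{0,1,\dots,6\}$ I would modify the $n=8k+7$ example by adjusting one or two block sizes by $1$ and, if necessary, redirecting $O(1)$ edges, thus obtaining a family of constructions indexed by the residue of $n$. In each case the verification breaks into three routine checks: (i) every vertex has in- and out-degree at least $\lceil(3n-4)/8\rceil-1$, (ii) some vertex attains equality, and (iii) the Hall-violating set $S$ continues to satisfy $|N^+(S)|<|S|$.

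The main obstacle is bookkeeping rather than conceptual: the value $\lceil(3n-4)/8\rceil-1$ jumps in a non-uniform way across the eight residue classes, and the within-block regular tournaments force parity constraints (odd order) on some block sizes. These must be juggled against the requirement that the Hall deficit is still witnessed, which is the delicate point when $n$ is small or of an awkward residue mod $8$.
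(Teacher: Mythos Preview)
Your plan matches the paper's approach in outline: a block construction with an expansion obstruction (equivalently, a Hall violation) and a case analysis over residues modulo~$8$. The paper also handles all eight residues explicitly via a table of block sizes, so your ``adjust block sizes by~$1$'' step is exactly what happens there.

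However, your sketch of the construction itself is inaccurate in two respects that matter. First, the paper uses exactly four blocks $A,B,C,D$, each of size $n/4+O(1)$; there are no blocks of size~$\approx n/8$. Second, and more importantly, it is not true that within each block one places either nothing or a regular tournament: the blocks $B$ and $D$ are internally empty but are joined by a \emph{bipartite tournament} (roughly half the $B$--$D$ edges in each direction). This is essential. If instead all $B$--$D$ edges ran one way, the vertices in $B$ or in $D$ would have semi-degree only about $n/4$, far below the target $3n/8$. So the construction as you describe it cannot attain $\delta^0(G)=\lceil(3n-4)/8\rceil-1$.

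The actual picture is: regular (or near-regular) tournaments on $A$ and on $C$; a near-regular bipartite tournament between $B$ and $D$; and all edges $A\to B$, $B\to C$, $C\to D$, $D\to A$. The $1$-factor obstruction is that $N^+(B\cup C)\subseteq C\cup D$ with $|B|>|D|$, which is precisely your Hall violation (the paper phrases it as ``every path between two vertices of $B$ must pass through $D$'', which is the same thing). The delicate bookkeeping you anticipate is real: the paper has to construct the $B$--$D$ bipartite tournament rather carefully in two sub-cases to hit the exact degree targets.
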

\begin{proof}
We construct $G$ as follows.
It has $n$ vertices partitioned into $4$ parts $A,B,C,D$, with $|B|>|D|$.
Each of $A$ and $C$ spans a tournament, $B$ and $D$ are joined by a bipartite tournament,
and we add all edges from $A$ to $B$, from $B$ to $C$, from $C$ to $D$ and from $D$ to $A$
(see Figure~1).
Since every path which joins two vertices in~$B$ has to pass through~$D$, it follows that
every cycle contains at least as many vertices from~$D$ as it contains from~$B$.
As $|B|>|D|$ this means that one cannot cover all the vertices of~$G$ by disjoint cycles,
i.e.~$G$ does not contain a 1-factor.
\begin{figure}\label{fig:extremal}
\centering\footnotesize
\psfrag{1}[][]{$A$}
\psfrag{2}[][]{$B$}
\psfrag{3}[][]{$C$}
\psfrag{4}[][]{$D$}
\includegraphics[scale=0.7]{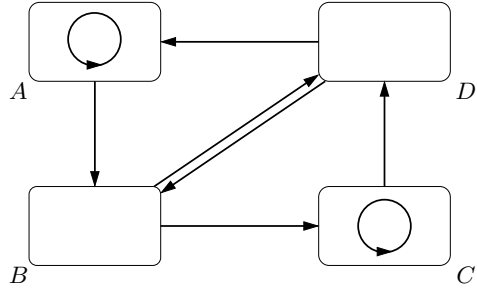}
\caption{The oriented graph in the proof of Proposition~\ref{extremal_example}.}
\end{figure}

It remains to show that the sizes of $A,B,C,D$ and the tournaments can be chosen to give

minimum semi-degree~$\delta^0(G)=\lceil(3n-4)/8\rceil -1$. The following table gives possible
values, according to the value of $n$ mod $8$:
$$
\begin{array}{l|l|l|l|l|l}
n & \lceil(3n-4)/8\rceil -1 & |A| & |B| & |C| & |D| \\
\hline
8k-1 & 3k-1 & 2k-1 & 2k+1 & 2k-1 & 2k \\
8k   & 3k-1 & 2k   & 2k+1 & 2k-1 & 2k \\
8k+1 & 3k-1 & 2k   & 2k+1 & 2k   & 2k \\
8k+2 & 3k   & 2k   & 2k+2 & 2k-1 & 2k+1 \\
8k+3 & 3k   & 2k   & 2k+2 & 2k   & 2k+1 \\
8k+4 & 3k   & 2k+1 & 2k+2 & 2k   & 2k+1 \\
8k+5 & 3k+1 & 2k+1 & 2k+2 & 2k+1 & 2k+1 \\
8k+6 & 3k+1 & 2k+2 & 2k+2 & 2k+1 & 2k+1
\end{array}
$$
We choose the tournaments inside $A$ and $C$ to be as regular as possible
(meaning that the indegree and outdegree of every vertex differ by at most $1$):
this can be achieved (e.g.) by arranging the vertices evenly around a circle
and directing edges so that the larger part of the circle lies to the right of each edge,
breaking ties arbitrarily.
For the bipartite tournament between $B$ and $D$ there are two cases. Firstly, when
$|B|=2k+1$ and $|D|=2k$ we choose it so that $|N^+(b) \cap D| = |N^-(b) \cap D| = k$
for every $b \in B$ and $\{|N^+(d) \cap B|, |N^-(d) \cap B|\} = \{k,k+1\}$ (in either
order) for every $d \in D$: this can be achieved by the blow-up of a $4$-cycle,
i.e., write $B=B_1 \cup B_2$, $D = D_1 \cup D_2$ with $|B_1|=k+1$, $|B_2|=|D_1|=|D_2|=k$
and direct edges from $B_1$ to $D_1$, from $D_1$ to $B_2$, from $B_2$ to $D_2$ and
from $D_2$ to $B_1$. Secondly, when $|B|=2k+2$ and $|D|=2k+1$ we choose the bipartite
tournament between~$B$ and~$D$ so
that $|N^+(b) \cap D|=k+1$ and $|N^-(b) \cap D|=k$ for every $b \in B$ and
so that for every $d\in D$ either $|N^+(d) \cap B|=|N^-(d) \cap B|=k+1$
or $|N^+(d) \cap B|=k$ and $|N^-(d) \cap B|=k+2$. This can
be achieved by labelling $B$ as $[2k+2]$, $D$ as $\mb{Z}/(2k+1)\mb{Z}$
and setting $N^+(b) \cap D := \{(k+1)b, (k+1)b+1, \dots, (k+1)b+k\}$ for $b \in B$.
Indeed, to check that the vertices in~$D$ have the correct indegrees note that
$N^+(b)\cap D$ is a segment of consecutive vertices of~$D$ for each $b\in B$
and the segment $N^+(b+1)\cap D$ starts immediately after $N^+(b)\cap D$
and ends with the first vertex of $N^+(b)\cap D$.

To verify the minimum semi-degree condition
we introduce the notation $\delta^+_S, \delta^-_S$ for the minimum in- and
outdegrees of vertices in $S \in \{A,B,C,D\}$. We can eliminate some of the
checking by noting that
we have always chosen $|C| \le |A|$ and $|D| < |B|$, so we have inequalities
$\delta_C^+ < \delta_C^- \le \delta_A^+$ and $\delta_C^+ \le \delta_A^-$ as well as
$\delta_B^+ \le \delta_D^-$. In the cases $n=8k-1,8k,8k+1$ we also have
$\delta_B^+ \le \delta_D^+$ and $\delta_B^+ \le \delta_B^-$, so
$$
\delta^0(G) = \min \{\delta_B^+, \delta_C^+\} =
\min \{|C|+\lfloor |D|/2 \rfloor, \lfloor (|C|-1)/2 \rfloor + |D|\} = 3k-1.
$$
In the case $n=8k+2$ we have $\delta_B^+ = |C| + (k+1) = 3k$,
$\delta_B^- = |A| + k = 3k$, $\delta_C^+ = \lfloor (|C|-1)/2 \rfloor + |D| = 3k$ and
$\delta_D^+ = |A| + k = 3k$, so $\delta^0(G)=3k$. The constructions for the cases $n=8k+3,8k+4$ are
obtained from $n=8k+2$ by increasing $|A|$ and/or $|C|$ by $1$, and we still
have $\delta^0(G) = \delta_C^+ = 3k$. When $n=8k+5$ we have
$\delta_B^+ = |C| + (k+1) = 3k+2$, $\delta_B^- = |A| + k = 3k+1$,
$\delta_C^+ = \lfloor (|C|-1)/2 \rfloor + |D| = 3k+1$ and
$\delta_D^+ = |A| + k = 3k+1$, so $\delta^0(G) = 3k+1$. Finally, the construction for the
case $n=8k+6$
is obtained from $n=8k+5$ by increasing $|A|$ by $1$ and we still have
$\delta^0(G)=\delta_C^+=3k+1$. In all cases we have $\delta^0(G) = \lceil(3n-4)/8\rceil -1$,
as required. \end{proof}

\nib{Remark.} One may add any number of edges that either go from $A$ to $C$ or lie within $D$
without creating a $1$-factor (although this does not increase the minimum semi-degree).

\section{The Diregularity Lemma, the Blow-up Lemma and other tools}\label{sec:reglem}
In this section we collect all the information we need about the
Diregularity Lemma and the Blow-up Lemma. See~\cite{KSi} for a survey on the Regularity Lemma
and~\cite{JKblowup} for a survey on the Blow-up Lemma.
We start with some more notation. The density of a bipartite graph $G = (A,B)$
with vertex classes~$A$ and~$B$ is defined to be
\[d_G(A,B) := \frac{e_G(A,B)}{|A||B|}.\]
We often write $d(A,B)$ if this is unambiguous. Given $\eps>0$, we say
that~$G$ is \emph{$\eps$-regular} if for all subsets $X\subseteq A$ and $Y\subseteq B$
with $|X|>\eps|A|$ and $|Y|>\eps|B|$ we have that
$|d(X,Y)-d(A,B)|<\eps$. Given $d\in [0,1]$ we say that~$G$ is $(\eps,d)$-\emph{super-regular}
if it is $\eps$-regular and furthermore $d_G(a)\ge (d-\eps) |B|$ for all $a\in A$ and
$d_G(b)\ge (d-\eps)|A|$ for all $b\in B$. (This is a slight variation of the standard definition
of $(\eps,d)$-super-regularity where one requires $d_G(a)\ge d |B|$ and
$d_G(b)\ge d|A|$.)
Given partitions $V_0,V_1,\ldots,V_k$ and $U_0,U_1,\dots,U_{\ell}$ of the vertex set of some graph, we say that
$V_0,V_1,\ldots,V_k$ \emph{refines} $U_0,U_1,\dots,U_{\ell}$ if for all $V_i$ with
$i\ge 1$ there is some $U_j$ with $j\ge 0$ which contains $V_i$. Note that $V_0$ need not be contained
in any $U_j$, so this is weaker than the usual notion of refinement of partitions.

The Diregularity Lemma is a version of the Regularity Lemma for digraphs due to
Alon and Shapira~\cite{AlonShapiraTestingDigraphs}. Its proof is quite similar to
the undirected version.
We will use the degree form of the Diregularity Lemma which can be easily derived
(see e.g.~\cite{young_05_extremal} for an explicit proof of all but the `refinement property')
from the standard version, in exactly the same manner
as the undirected degree form.

\begin{lemma}[Degree form of the Diregularity Lemma]\label{direg}
For every $\eps\in (0,1)$ and all numbers~$M'$, $M''$ there are numbers~$M$ and~$n_0$
such that if
\begin{itemize}
\item $G$ is a digraph on $n\ge n_0$ vertices,
\item $U_0,\dots,U_{M''}$ is a partition of the vertices of $G$,
\item $d\in[0,1]$ is any real number,
\end{itemize}
then there is a partition of the vertices of~$G$ into
$V_0,V_1,\ldots,V_k$ and a spanning subdigraph~$G'$ of~$G$ such that the following holds:
\begin{itemize}
\item $M'\le k\leq M$,
\item $|V_0|\leq \eps n$,
\item $|V_1|=\dots=|V_k|=:m$,
\item $V_0,V_1,\ldots,V_k$ refines the partition $U_0,U_1,\dots,U_{M''}$,
\item $d^+_{G'}(x)>d^+_G(x)-(d+\eps)n$ for all vertices $x\in G$,
\item $d^-_{G'}(x)>d^-_G(x)-(d+\eps)n$ for all vertices $x\in G$,
\item for all $i=1,\dots,k$ the digraph $G'[V_i]$ is empty,	
\item for all $1\leq i,j\leq k$ with $i\neq j$ the bipartite graph whose vertex classes
are~$V_i$ and~$V_j$ and whose edges are all the $V_i$-$V_j$ edges in~$G'$
is $\eps$-regular and has density either~$0$ or density at least~$d$.
\end{itemize}
\end{lemma}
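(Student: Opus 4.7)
The plan is to deduce Lemma~\ref{direg} from the standard Alon--Shapira Diregularity Lemma~\cite{AlonShapiraTestingDigraphs} in exactly the same way that one obtains the degree form of the undirected Regularity Lemma from its standard version. Choose auxiliary constants with $\eps' \ll \eps, d, 1/M''$ and a lower bound $M_1' \ge M'$ (chosen large enough in terms of $\eps$) on the number of parts. Apply the standard Diregularity Lemma to $G$ with regularity parameter $\eps'$ and lower bound $M_1'$, starting from the given partition $U_0, U_1, \ldots, U_{M''}$; this feature is part of the standard statement, and can in any case be enforced by intersecting the output with $U_0, \ldots, U_{M''}$ and re-equalizing with negligible loss. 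This yields an $\eps'$-regular equipartition $W_0, W_1, \ldots, W_k$ that refines $U_0, \ldots, U_{M''}$, and in which at most $\eps' k^2$ of the ordered pairs $(W_i, W_j)$ with $i \neq j$ fail to be $\eps'$-regular.

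I would then clean up this partition in two stages. First, move to $W_0$ every class $W_i$ that is incident to more than $\sqrt{\eps'} k$ non-$\eps'$-regular ordered pairs; a double counting argument shows this enlarges $W_0$ by at most $\sqrt{\eps'} n$. Second, for each remaining $\eps'$-regular pair $(W_i, W_j)$ of density~$p$, $\eps'$-regularity forces $|N^+(x) \cap W_j|, |N^-(x) \cap W_j| \le (p + \eps') m$ for all but $2\eps' m$ vertices $x \in W_i$; move to $W_0$ any vertex that is exceptional in this sense for more than $\sqrt{\eps'} k$ different $W_j$, which by averaging adds at most $O(\sqrt{\eps'} n)$ further vertices. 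Set $V_0$ to be the enlarged $W_0$ (together with a few equalizing vertices) and let $V_1, \ldots, V_k$ be the surviving $W_i$. Define $G'$ by deleting from $G$ every edge incident to $V_0$, every edge with both endpoints in some $V_i$, every edge in a non-$\eps'$-regular ordered pair $(V_i, V_j)$, and every edge in a pair of density less than $d$.

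For any surviving vertex $x \in V_i$, the resulting out-degree loss is at most
\[
|V_0| + m + \sqrt{\eps'}\, k m + (d + \eps')(k-1) m \le (d + \eps) n,
\]
provided we took $\eps' \le (\eps/8)^2$ and $M_1' \ge 8/\eps$; the in-degree loss is bounded identically. All the bulleted properties of Lemma~\ref{direg} then follow directly from the construction, with $M$ absorbing the upper bound from the standard lemma. The main obstacle, and the reason one has to work harder than just quoting the standard lemma, is the per-vertex bound for edges in low-density pairs: the naive estimate $|N^+(x) \cap V_j| \le m$ would only give a loss of $(k-1)m = n$, so one really does need the finer $\eps'$-regularity argument, together with the trick of pushing doubly exceptional vertices into $V_0$. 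The refinement property is then immediate from the fact that the original equipartition already refined $U_0, \ldots, U_{M''}$ and the cleanup stages only moved vertices into $V_0$.
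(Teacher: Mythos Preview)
The paper does not actually prove Lemma~\ref{direg}: it states that the degree form ``can be easily derived \dots\ from the standard version, in exactly the same manner as the undirected degree form'' and points to~\cite{young_05_extremal} for details. Your proposal carries out precisely this standard derivation, and the overall strategy---apply Alon--Shapira with a much smaller $\eps'$, discard clusters lying in too many irregular pairs, then use $\eps'$-regularity to control per-vertex degrees into low-density pairs and push the remaining exceptional vertices into $V_0$---is correct and is indeed one of the usual routes to the degree form.

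There is one genuine slip. You define $G'$ by deleting ``every edge incident to $V_0$''. But the lemma requires $d^+_{G'}(x)>d^+_G(x)-(d+\eps)n$ for \emph{all} $x\in G$, including $x\in V_0$; with your definition such a vertex has $d^+_{G'}(x)=0$, which fails whenever $d^+_G(x)>(d+\eps)n$. The fix is simply to keep all edges incident to $V_0$ in $G'$: none of the structural conclusions of the lemma (emptiness of $G'[V_i]$, regularity of the $(V_i,V_j)$ pairs) constrain $V_0$, and for $x\in V_0$ the degree condition then holds trivially. As a bonus, for $x\in V_i$ with $i\ge 1$ you may drop the $|V_0|$ term from your loss estimate, so the numerics only improve. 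With this correction your argument goes through and matches what the paper intends.
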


$V_1,\ldots,V_k$ are called \emph{clusters}, $V_0$ is called the \emph{exceptional set}
and the vertices in~$V_0$ are called \emph{exceptional vertices}.
The last condition of the lemma says that all pairs of clusters are $\eps$-regular in both
directions (but possibly with different densities).
We call the spanning digraph~$G'\subseteq G$ given by the Diregularity Lemma the \emph{pure digraph}.
Given clusters $V_1,\ldots,V_k$ and a digraph~$G'$, the \emph{reduced digraph~$R'$
with parameters $(\eps,d)$} is the digraph
whose vertex set is~$[k]$ and in which~$ij$ is an edge
if and only if the bipartite graph whose vertex classes
are~$V_i$ and~$V_j$ and whose edges are all the $V_i$-$V_j$ edges in~$G'$
is $\eps$-regular and has density at least~$d$.%
\COMMENT{Since we are contiually modifying our partitions later on in Case 1, this seems to be
more suitable than the definition in the approximate paper}
(So if $G'$ is the pure digraph, then $ij$ is an edge in $R'$ if and only if there is a
$V_i$-$V_j$ edge in~$G'$.)

It is easy to see that the reduced digraph $R'$ obtained from the regularity lemma
`inherits' the minimum degree of~$G$, in that $\delta^+(R')/|R'| > \delta^+(G)/|G| - d - 2\eps$
and $\delta^-(R')/|R'| > \delta^-(G)/|G| - d - 2\eps$.
However,~$R'$ is not necessarily oriented even if the original digraph~$G$ is.
The next lemma (which is essentially from~\cite{KKO})
shows that by discarding edges with appropriate probabilities one can go over to a
reduced oriented graph $R\subseteq R'$ which still inherits the minimum degree and density of~$G$.
\begin{lemma} \label{red-orient}
For every $\eps\in (0,1)$ and there exist numbers~$M'=M'(\eps)$ and $n_0=n_0(\eps)$
such that the following holds. Let $d\in [0,1]$ with $\eps \le d/2$,
let~$G$ be an oriented graph of order
$n \ge n_0$ and let~$R'$ be the reduced digraph with parameters $(\eps,d)$
obtained by applying the Diregularity Lemma to~$G$ with $M'$ as the lower bound on the number of clusters.
Then~$R'$ has a spanning oriented subgraph~$R$ such that
\begin{itemize}
\item[{\rm (a)}]$\delta^+(R)\ge (\delta^+(G)/|G|-(3\eps+d))|R|$,
\item[{\rm (b)}] $\delta^-(R)\ge(\delta^-(G)/|G|-(3\eps+d))|R|$,
\item[{\rm (c)}]
for all disjoint sets $S,T \subset V(R)$ with $e_G(S^*,T^*) \ge 3d n^2$ we have $e_R(S,T) > d|R|^2$,
where $S^*:= \bigcup_{i \in S} V_i$ and $T^*:= \bigcup_{i \in T} V_i$.
\item[{\rm (d)}]
for every set $S\subset V(R)$ with $e_G(S^*) \ge 3d n^2$ we have $e_R(S) > d|R|^2$,
where $S^*:= \bigcup_{i \in S} V_i$.
\end{itemize}
\end{lemma}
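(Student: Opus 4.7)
I would build $R$ by a biased random orientation of the ``two-way'' pairs of $R'$, with weights proportional to the edge densities of $G$, and then verify (a)--(d) via concentration. For each unordered pair $\{i,j\}$ of distinct non-exceptional clusters, independently: if exactly one of $ij,ji$ lies in $E(R')$, put that edge into $R$; if both do, put $ij$ into $R$ with probability $a_{ij}/(a_{ij}+a_{ji})$ and $ji$ with the complementary probability, where $a_{ij}:=e_G(V_i,V_j)/m^2$; if neither lies in $R'$, put nothing. The result is a spanning oriented subgraph of $R'$.

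The central observation is that for every $ij\in E(R')$,
\[
\Pr[ij\in E(R)]\;\ge\;a_{ij},
\]
since in the one-way case this probability is $1\ge a_{ij}$ and in the two-way case it is $a_{ij}/(a_{ij}+a_{ji})\ge a_{ij}$ using the oriented-graph identity $a_{ij}+a_{ji}\le 1$. This converts every lower bound we need into a bound on a sum of $G$-densities. For (a), the total out-density $\sum_{j\ne i}a_{ij}$ is at least $(\delta^+(G)/n-O(\eps))|R|$ after subtracting the contributions from edges into $V_0$ and within $V_i$, while the contribution from non-$R'$ pairs (where the $G'$-density is $0$) is at most $(d+O(\eps))|R|$ via $|E(G)\setminus E(G')|<(d+\eps)n^2$; combining gives $\E[d^+_R(i)]\ge(\delta^+(G)/n-d-O(\eps))|R|$, and (b) is symmetric. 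For (c), the hypothesis $e_G(S^*,T^*)\ge 3dn^2$ translates to $\sum_{(i,j)\in S\times T}a_{ij}\ge 3d|R|^2$, so subtracting the non-$R'$ mass yields $\E[e_R(S,T)]\ge(2d-O(\eps))|R|^2>d|R|^2$ under $\eps\le d/2$; (d) is analogous, using that edges within clusters of $S^*$ contribute only $O(n^2/|R|)$ to $e_G(S^*)$.

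Because the random choices are independent across the $\binom{|R|}{2}$ unordered pairs, each of $d^+_R(i)$, $d^-_R(i)$, $e_R(S,T)$, $e_R(S)$ is a sum of independent $\{0,1\}$ variables and Hoeffding's inequality applies. With deviations of order $\eps|R|$ for the degree conditions and $d|R|^2$ for the edge-count conditions, the individual failure probabilities are $\exp(-\Omega(\eps^2|R|))$ and $\exp(-\Omega(d^2|R|^2))$ respectively, and each survives a union bound over the $2|R|$ vertex constraints and the $\le 4^{|R|}$ pair/set choices, provided $|R|\ge M'(\eps)$ is taken large enough (using $d\ge 2\eps$ to absorb any $d$-dependence of the threshold into $\eps$). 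Hence with positive probability $R$ satisfies (a)--(d) simultaneously. The main obstacle is the choice of weights: a naive uniform $1/2$--$1/2$ split would halve $\E[d^+_R(i)]$ in the worst case where every $R'$-out-neighbour of $i$ arises from a two-way pair, making (a) fail; the density-weighting, combined with the oriented-graph identity $a_{ij}+a_{ji}\le 1$, is precisely what allows the minimum-degree information of $G$ to transfer through to $R$.
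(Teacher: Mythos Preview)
Your proposal is correct and follows essentially the same approach as the paper: a density-weighted random orientation of the two-way pairs in $R'$, the key inequality $\Pr[ij\in E(R)]\ge a_{ij}$ coming from the oriented-graph bound $a_{ij}+a_{ji}\le 1$, followed by Chernoff/Hoeffding plus a union bound over all vertices and all $\le 4^{|R|}$ choices of $(S,T)$. The only cosmetic difference is that the paper defines the weights via the pure digraph $G'$ rather than $G$ (so the ``non-$R'$'' subtraction is absorbed automatically) and cites \cite{KKO} for (a) and (b) instead of redoing the degree computation; your version with $G$-densities and an explicit subtraction of the non-$R'$ mass works equally well.
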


\begin{proof}
Statements (a) and (b) are proven in~\cite{KKO} by considering the following probabilistic argument.
Let~$R$ be the spanning oriented subgraph obtained from~$R'$ by deleting edges randomly as follows.
For every unordered pair~$V_i,V_j$ of clusters we delete the edge~$ij$ (if it lies in~$R'$)
with probability
\begin{equation}\label{eq:probdelete}
\frac{e_{G'}(V_j,V_i)}{e_{G'}(V_i,V_j)+e_{G'}(V_j,V_i)}.
\end{equation}
Otherwise we delete~$ji$ (if it lies in~$R'$). We interpret~(\ref{eq:probdelete})
as~0 if $ij,ji\notin E(R')$. So if~$R'$ contains at most one of the edges $ij,ji$
then we do nothing. We do this for all unordered pairs of clusters independently.
In~\cite{KKO} it was shown that $R$ satisfies (a) and (b) with probability at least $3/4$.
For~(c), note that
\begin{align*}
\mb{E} (e_R(S,T)) & = \sum_{i \in S, j \in T}{\frac{e_{G'}(V_i,V_j)}{e_{G'}(V_i,V_j)+e_{G'}(V_j,V_i)}}
\ge \sum_{i \in S, j \in T} \frac{e_{G'}(V_i,V_j)}{|V_i||V_j|} \\
 & \ge  \frac{e_G(S^*,T^*) - (d+\eps)n|S|m}{m^2} > \frac{3}{2}d|R|^2.
\end{align*}
A Chernoff-type bound (see e.g.~\cite[Cor.~A.14]{ProbMeth}) now implies that there exists
an absolute constant~$c$ such that%
     \COMMENT{To see that this Cor can be applied, note that $e_R(S,T)$ is a sum of independent indicator variables}
\begin{align*}
\mathbb{P}(e_R(S,T)<d|R|^2) & \le
\mathbb{P} \left( |e_R(S,T)-\mathbb{E}(e_R(S,T))| >\mathbb{E}(e_R(S,T)/3) \right)\\
& \le {\eul}^{-c \mathbb{E}(e_R(S,T))}\le {\eul}^{-c d|R|^2}.
\end{align*}
But the number of pairs $(S,T)$ of disjoint subsets of~$V(R)$ is at most $(2^{|R|})^2$,
so the probability that
$R$ does not satisfy (c) is at most  $2^{2|R|} {\eul}^{-c d|R|^2} < 1/4$. (Here we used
that $|R|\ge M'$ is sufficiently large.) The proof for~(d) is similar.
Altogether, this implies that there must be some outcome which satisfies (a)--(d).
\end{proof}


The oriented graph~$R$ given by Lemma~\ref{red-orient} is called the \emph{reduced
oriented graph with parameters $(\eps,d)$}. The spanning oriented subgraph~$G^*$ of
the pure digraph~$G'$ obtained by deleting all the $V_i$-$V_j$ edges whenever
$ij\in E(R')\setminus E(R)$ is called the \emph{pure oriented
graph}. Given an oriented subgraph $S\subseteq R$, the
\emph{oriented subgraph of~$G^*$ corresponding to~$S$}
is the oriented subgraph obtained from~$G^*$ by deleting all those vertices that lie in clusters not
belonging to~$S$ as well as deleting all the $V_i$-$V_j$ edges for all pairs $V_i,V_j$ with
$ij\notin E(S)$.

In our proof of Theorem~\ref{main} we will also need the Blow-up Lemma of Koml\'os, S\'ark\"ozy and
Szemer\'edi~\cite{KSSblowup}. It implies that dense regular pairs behave
like complete bipartite graphs with respect to containing bounded degree
graphs as subgraphs.

\begin{lemma}[Blow-up Lemma, Koml\'os, S\'ark\"ozy and
Szemer\'edi~\cite{KSSblowup}]\label{standardblowup}
Given a graph $F$ on $[k]$ and positive numbers $d,\Delta$, there is
a positive real $\eta_0=\eta_0(d,\Delta,k)$
such that the following holds for all positive numbers
$\ell_1,\dots,\ell_k$ and all $0<\eta\le \eta_0$. Let $F'$ be the graph obtained
from $F$ by replacing each vertex $i\in F$ with a set $V_i$ of $\ell_i$ new
vertices and joining
all vertices in $V_i$ to all vertices in $V_j$ whenever $ij$ is an edge
of $F$. Let $G'$ be a spanning subgraph of $F'$
such that for every edge $ij\in F$ the graph $(V_i,V_j)_{G'}$ is
$(\eta,d)$-super-regular. Then $G'$ contains a copy of every subgraph $H$ of
$F'$ with $\Delta(H)\le \Delta$. Moreover, this copy of $H$ in $G'$ maps the vertices of $H$
to the same sets $V_i$ as the copy of $H$ in~$F'$, i.e.~if $h \in V(H)$ is mapped to $V_i$ by the copy
of~$H$ in~$F'$, then it is also mapped to $V_i$ by the copy of $H$ in~$G'$.
\end{lemma}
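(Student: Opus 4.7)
The plan is to embed $H$ into $G'$ vertex by vertex via a randomized greedy procedure, maintaining for each unembedded vertex $h$ of $H$ a \emph{candidate set} $C(h)\subseteq V_{\pi(h)}$ consisting of those vertices of $V_{\pi(h)}$ that are still adjacent in $G'$ to the images of all already-embedded neighbours of $h$ in $H$ (here $\pi\colon V(H)\to[k]$ denotes the prescribed cluster assignment). Initially $C(h)=V_{\pi(h)}$; the whole argument revolves around keeping $|C(h)|$ close to its ``expected'' value $d^{t}\,|V_{\pi(h)}|$, where $t$ is the number of embedded neighbours of $h$, so that no vertex ever becomes unembeddable.

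First I would carve out a small \emph{buffer} $B_i\subseteq V_i$ of size roughly $\eta^{1/3}\,\ell_i$ in each cluster such that every pair $(V_i\sm B_i,V_j\sm B_j)_{G'}$ with $ij\in E(F)$ remains $(\eta',d)$-super-regular (which is possible because a random choice of the buffer works on expectation). Correspondingly, I fix a set $\widetilde B\subseteq V(H)$ with $|\widetilde B\cap\pi^{-1}(i)|=|B_i|$ whose elements are pairwise at graph-distance $\ge 3$ in $H$; such a set exists by a simple greedy argument using $\Delta(H)\le\Delta$ and the fact that the buffer is small. The vertices of $\widetilde B$ will be deferred; the remaining vertices of $H$ are ordered arbitrarily and embedded one at a time, choosing the image of the current vertex $h$ uniformly at random from $C(h)$, while rejecting any choice that would damage too many candidate sets of later vertices.

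The technical core is showing that with positive probability the candidate sets stay healthy throughout the greedy phase. By $\eta$-regularity of the pairs, embedding $h$ shrinks a neighbouring $C(h')$ by a factor close to $d$ in expectation, and a martingale/Azuma-type concentration bound controls the deviation, so the fraction of \emph{troublemakers} (vertices $h'$ whose $|C(h')|$ has dropped below the target) can be kept to $o(\ell_i)$ in each cluster. Such troublemakers are repaired on the fly by a \emph{switching step}: locate in the buffer $B_{\pi(h')}$ an unused vertex $u$ with the correct adjacencies to serve as an image of $h'$, reassign $h'\mapsto u$, and free $h'$'s old image for future use. The super-regular lower-bound on degrees inside the buffer guarantees that such a $u$ always exists provided the number of troublemakers is small compared to $|B_{\pi(h')}|$.

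Finally, after all non-buffer vertices of $H$ are placed, the deferred vertices in $\widetilde B$ must be embedded into the leftover buffer vertices $B_i'\subseteq B_i$. Because the elements of $\widetilde B$ are pairwise non-adjacent in $H$, this reduces, for each cluster independently, to finding a perfect matching in the auxiliary bipartite graph $J_i$ between $\widetilde B\cap\pi^{-1}(i)$ and $B_i'$ in which $h$ is joined to $v$ iff $v$ is adjacent in $G'$ to the images of all neighbours of $h$. The super-regularity preserved on the buffers, together with $\Delta(H)\le\Delta$, forces every vertex of $J_i$ to have degree at least $(d/2)|B_i'|$, which makes Hall's condition easy to verify, so the matching exists and completes the embedding. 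The main obstacle I expect is the simultaneous bookkeeping in the greedy phase: one must tune $\eta_0$, the buffer size, and the rejection rule so that the switching step never exhausts the buffer while simultaneously preserving enough regularity on the leftover buffers to carry out the final Hall-type matching; getting these three quantities to fit together, uniformly in the sizes $\ell_i$, is what makes the proof delicate.
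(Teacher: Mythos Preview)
The paper does not prove this lemma at all: it is quoted from Koml\'os, S\'ark\"ozy and Szemer\'edi~\cite{KSSblowup} and used as a black box (the only comment the authors add is that the `moreover' part is explicit in the proof in~\cite{KSSblowup}). So there is no ``paper's own proof'' to compare your proposal against.

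That said, your sketch is broadly in the spirit of the original Koml\'os--S\'ark\"ozy--Szemer\'edi argument: a randomized greedy embedding that tracks candidate sets, a reserved buffer for deferred vertices, a repair/switching mechanism for vertices whose candidate sets shrink too fast, and a final K\"onig--Hall matching to place the buffer. One point to be careful about: in the genuine proof the buffer vertices of $H$ need to be far enough apart that embedding the non-buffer vertices and the final matching step do not interfere, and the switching step is more elaborate than a single reassignment (it typically involves finding short augmenting chains rather than just one free buffer vertex). Your outline glosses over these, but as a high-level plan it is the right one; for the purposes of this paper, however, citing~\cite{KSSblowup} is all that is required.
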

The `moreover' part of the statement is not part of the usual statement of the Blow-up Lemma
but is stated explicitly in its proof.

We will also need to use the following stronger and more technical version due to
Csaba~\cite{csaba_06_bollobas_eldridge}. (The case when $\Delta=3$ of this is implicit
in~\cite{Csaba_Szemeredi_Bollobas_Eldridge_D3}.) It removes the dependency of the regularity
constant $\eta$ on the number $k$ of clusters. Also, it does not demand
super-regularity. The latter is replaced by conditions  (C6),(C8) and (C9) below.
Moreover, it is explicitly formulated to allow for a set $V_0$ of exceptional vertices in the
host graph $G'$ which are not part of any regular pairs.
For this to work, one first has to find a suitable partition $L_0,\dots,L_k$ with $|L_0|=|V_0|$
of the graph $H$ which we are aiming to embed
and a suitable bijection $\phi:L_0\to V_0$.
The embedding of~$H$ into~$G'$ guaranteed by the Blow-up Lemma is then found by a randomized
algorithm which first embeds each vertex $x\in L_0$ to~$\phi(x) \in V_0$
and then successively embeds the remaining vertices of~$H$.
Condition~(C1) requires that there are not too many exceptional vertices and~(C2) ensures
that we can embed the vertices in~$L_0$ without affecting the neighbourhood of other such vertices.
$L_i$ will be embedded into the cluster~$V_i$ of $G'$, so we clearly need to assume~(C3).
Condition~(C5) gives us a reasonably
large set~$D$ of `buffer vertices' which will be embedded last by the randomized algorithm.
(C7)~ensures that the exceptional vertices have large degree
in all `neighbouring clusters'.
(C8) and~(C9) allow us to embed those vertices whose set of candidate images
in~$G'$ has grown very small at some point of the algorithm.

In the statement of Lemma~\ref{blowup} and later on
we write $0<a_1 \ll a_2 \ll a_3$ to mean that we can choose the constants
$a_1,a_2,a_3$ from right to left. More
precisely, there are increasing functions $f$ and $g$ such that, given
$a_3$, whenever we choose some $a_2 \leq f(a_3)$ and $a_1 \leq g(a_2)$, all
calculations needed in the proof of Lemma~\ref{blowup} are valid.
Hierarchies with more constants are defined in the obvious way.%
      \COMMENT{The BL is trivial unless $m\ge 1/\eps$ (as otherwise each of the
non-empty bipartite graphs $(V_i,V_j)_{G'}$ must be complete) and this seems to be
enough for the pf of the BL, ie we don't need the additional assumptions
that $N\gg k$ or $N\gg 1/\eps$.}

\begin{lemma}[Blow-up Lemma, Csaba~\cite{csaba_06_bollobas_eldridge}]\label{blowup}
For all numbers~$\Delta, K_1,K_2,K_3$ and every positive constant~$c$ there exists an number~$N$
such that whenever $\eps,\eps',\delta',d$ are positive constants with
$$0<\eps\ll\eps'\ll\delta'\ll d\ll 1/\Delta,1/K_1,1/K_2,1/K_3,c
$$
the following holds. Suppose that~$G'$ is a graph of order~$n\ge N$ and $V_0,\dots,V_k$ is a partition
of~$V(G')$ such that the bipartite graph~$(V_i,V_j)_{G'}$ is $\eps$-regular
with density either $0$ or~$d$ for all $1\le i<j\le k$. Let~$H$ be a graph on~$n$ vertices with
$\Delta(H)\le \Delta$ and let $L_0, L_1,\dots, L_{k}$ be a partition of~$V(H)$
with $|L_i|=|V_i|=:m$ for every $i=1,\dots, k$. Furthermore, suppose that
there exists a bijection $\phi:L_0\rightarrow V_0$ and a set $I\subseteq V(H)$
of vertices at distance at least~$4$
from each other such that the following conditions hold:
\begin{itemize}
\item[\textup{(C1)}]$|L_0|=|V_0|\leq K_1dn$.
\item[\textup{(C2)}]$L_0\subseteq I$.
\item[\textup{(C3)}]$L_i$ is independent for every $i=1,\dots, k$.
\item[\textup{(C4)}]$|N_H(L_0)\cap L_i|\leq K_2dm$ for every $i=1,\dots, k$.
\item[\textup{(C5)}]For each $i=1,\dots, k$ there exists $D_i\subseteq I\cap L_i$ with
$|D_i|=\delta'm$ and such that for $D:=\bigcup_{i=1}^k D_i$ and all $1\leq i<j\leq k$
\[||N_H(D)\cap L_i|-|N_H(D)\cap L_j||<\eps m.\]
\item[\textup{(C6)}]If $xy\in E(H)$ and $x\in L_i,y\in L_j$ then $(V_i,V_j)_{G'}$ is $\eps$-regular
with density~$d$.
\item[\textup{(C7)}]If $xy\in E(H)$ and $x\in L_0, y\in L_j$ then $|N_{G'}(\phi(x))\cap V_j|\geq cm$.
\item[\textup{(C8)}]For each $i=1,\dots, k$, given any $E_i\subseteq V_i$ with $|E_i|\leq\eps' m$
there exists a set $F_i\subseteq(L_i\cap(I\setminus D))$ and a bijection $\phi_i:E_i\rightarrow F_i$
such that $|N_{G'}(v)\cap V_j|\geq (d-\eps)m$ whenever $N_H(\phi_i(v))\cap L_j\neq \emptyset$
(for all $v\in E_i$ and all $j=1,\dots,k$).
\item[\textup{(C9)}]Writing $F:=\bigcup_{i=1}^k F_i$ we have that
$|N_H(F)\cap L_i|\leq K_3\eps'm$.
\end{itemize}
Then~$G'$ contains a copy of~$H$ such that the image of~$L_i$ is~$V_i$ for all $i=1,\dots,k$ and the image of
each $x\in L_0$ is $\phi(x)\in V_0$.
\end{lemma}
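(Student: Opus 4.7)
The plan is to adapt the randomized embedding algorithm used in the original Blow-up Lemma of Koml\'os, S\'ark\"ozy and Szemer\'edi, modified to accommodate the three new features of this statement: the pre-specified embedding of the exceptional vertices via $\phi$, the replacement of super-regularity by conditions (C6)--(C9), and the removal of the dependence of $\eps$ on the number $k$ of clusters. First I would embed $L_0$ into $V_0$ via $\phi$. Then process the remaining vertices of $V(H)\setminus(L_0\cup D)$ in a uniformly random order: for each vertex $v\in L_i$ in turn, maintain its \emph{candidate set} $C(v)\subseteq V_i$, consisting of the currently unused vertices of $V_i$ that are adjacent in $G'$ to every already-embedded neighbor of $v$ in $H$, and embed $v$ uniformly at random into $C(v)$. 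Finally, embed the buffer vertices $D$ all at once via a Hall-type matching argument on the resulting bipartite candidate graph.

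The first technical task is to show that, during the main random phase, the candidate set $C(v)$ of a typical non-buffer vertex $v$ stays of size at least $(d-\eps)^{\Delta}m/2$, say, throughout. Each time a neighbor of $v$ is embedded, $C(v)$ typically shrinks by a factor close to~$d$ provided the current $|C(v)|$ exceeds $\eps m$: this is a direct application of $\eps$-regularity of the pair $(V_i,V_j)_{G'}$ supplied by~(C6). Vertices in $L_0$ are handled by~(C7), which guarantees that their fixed images have $\geq cm$ neighbors in every relevant cluster, so that they initialise the candidate sets of their $H$-neighbors to at least $cm$ rather than leaving them empty.

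The central difficulty, and the reason conditions (C8) and (C9) appear, is that a small number of vertices will inevitably turn \emph{bad}, meaning that $C(v)$ drops below $\eps' m$ at some stage. Instead of trying to embed such a $v$ normally I would defer it, collecting the set of ``problematic'' unused images in cluster~$V_i$ into a set $E_i\subseteq V_i$ of size at most $\eps' m$. Condition~(C8) then supplies a bijection $\phi_i\colon E_i\to F_i$ with $F_i\subseteq I\cap L_i$, and we reroute by taking each $e\in E_i$ to be the image of $\phi_i^{-1}(e)$ rather than of some bad vertex; since $\phi_i^{-1}(e)$ has large degree into every cluster containing one of its $H$-neighbors, those neighbors still have plenty of candidates. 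Condition~(C9) ensures that the total set $F$ of redirected vertices does not itself trigger a cascade of new bad events. With this repair done, the buffer $D$ is inserted via Hall's (equivalently, K\"onig's) theorem applied to the bipartite candidate graph between $D_i$ and the remaining unused vertices of $V_i$; the near-equal sizes guaranteed by (C5) are precisely what one needs to verify Hall's condition.

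The main obstacle will be controlling the bad events sharply enough that the resulting $\eps$ does not depend on~$k$. A naive union bound over clusters and candidate sets loses polynomial factors in $k$, forcing $\eps\ll 1/k$, which is exactly what the lemma forbids. The key technical innovation to be imported from Csaba's argument is to replace these union bounds by martingale concentration (Azuma's inequality applied to the Doob martingale of the random ordering), which controls deviations of $|C(v)|$ in terms of $\Delta$ alone, combined with a careful bookkeeping that processes vertices in groups according to their neighborhood structure rather than cluster-by-cluster. Once this $k$-free concentration is in place, the rest is the by-now-standard Blow-up Lemma machinery: verify inductively that the candidate sets have the expected sizes, exhibit the bad vertices' rerouting via~(C8)--(C9), and close with the Hall matching for the buffer.
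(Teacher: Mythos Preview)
The paper does not prove this lemma at all: it is quoted verbatim from Csaba~\cite{csaba_06_bollobas_eldridge} and used as a black box, with only the remark that the additional conclusion about images of $L_i$ and $L_0$ is explicit in Csaba's proof. So there is no ``paper's own proof'' to compare your proposal against.

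That said, your sketch is a faithful high-level outline of the Csaba argument and identifies the right ingredients: the pre-embedding of $L_0$ via $\phi$, the randomized greedy embedding with candidate sets, the repair mechanism for bad vertices via (C8)--(C9), the final K\"onig--Hall matching for the buffer $D$, and---crucially---the use of martingale concentration rather than a union bound over clusters to keep $\eps$ independent of $k$. One small slip: in your repair step you write ``taking each $e\in E_i$ to be the image of $\phi_i^{-1}(e)$'', but $\phi_i$ maps $E_i$ to $F_i$, so the $H$-vertex assigned to $e$ is $\phi_i(e)\in F_i$, not $\phi_i^{-1}(e)$. Beyond that, turning this outline into an actual proof requires substantial technical work (the careful phased processing, the precise martingale estimates, and the verification of Hall's condition using (C5)), all of which live in Csaba's paper rather than here.
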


The additional properties of the copy of~$H$ in~$G'$ are not included in the statement of the
lemma in~\cite{csaba_06_bollobas_eldridge} but are stated explicitly in the proof.

We aim to apply the Blow-up Lemma with~$G'$ being obtained from the underlying graph
of the pure oriented graph.
In order to satisfy~(C8), it will suffice to ensure that all the edges
of a suitable 1-factor in the reduced oriented graph~$R$ correspond to
$(\eps,d)$-super-regular pairs of clusters. A simple calculation implies
that this can be ensured by removing a small proportion
of vertices from each cluster~$V_i$, and so~(C8) will be satisfied.
However, (C6) requires all the edges of~$R$ to correspond to $\eps$-regular pairs of
density precisely~$d$ and not just at least~$d$.
(Although, as remarked by Csaba~\cite{csaba_06_bollobas_eldridge}, it actually suffices that
the densities are close to~$d$ in terms of~$\eps$.)
This is guaranteed by the second part of the following proposition from~\cite{KKO}.

\begin{prop}\label{superreg}
Let $M',n_0,D$ be positive numbers and let $\eps,d$ be positive reals such that
$1/n_0\ll 1/M'\ll \eps\ll d\ll 1/D$. Let~$G$ be an oriented graph of order at
least~$n_0$. Let~$R$ be the reduced oriented graph with parameters $(\eps,d)$
and let~$G^*$ be the pure oriented
graph obtained by successively applying first the Diregularity Lemma with $\eps$,
$d$ and~$M'$ to~$G$ and then Lemma~\ref{red-orient}. Let~$S$ be an oriented subgraph
of~$R$ with $\Delta(S)\le D$. Let~$G'$ be the underlying graph
of~$G^*$. Then one can delete $2D\eps|V_i|$ vertices from each cluster~$V_i$
to obtain subclusters $V'_i\subseteq V_i$ in such a way that~$G'$ contains a subgraph~$G'_S$ whose vertex
set is the union of all the $V'_i$ and such that
\begin{itemize}
\item $(V'_i,V'_j)_{G'_S}$ is $(\sqrt{\eps}, d-4D\eps)$-super-regular whenever~$ij\in E(S)$,
\item $(V'_i,V'_j)_{G'_S}$ is $\sqrt{\eps}$-regular and has density~$d-4D\eps$
whenever~$ij\in E(R)$.
\end{itemize}
\end{prop}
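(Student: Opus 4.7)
The plan is to carry out two operations in sequence: first remove a small number of vertices from each cluster to establish an approximate super-regularity on every edge of~$S$, and then remove edges to bring every surviving regular pair to density exactly $d-4D\eps$ while preserving $\sqrt\eps$-regularity.

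\emph{Vertex deletion.} For each $ij\in E(R)$ the pair $(V_i,V_j)_{G'}$ is $\eps$-regular of density $d_{ij}\ge d$, so by the usual consequence of $\eps$-regularity at most $\eps m$ vertices $v\in V_i$ satisfy $|N_{G'}(v)\cap V_j|<(d_{ij}-\eps)m$; call such vertices \emph{$ij$-bad}. Since $\Delta(S)\le D$, the set $B_i\subseteq V_i$ of vertices that are $ij$-bad (or $ji$-bad) for some edge of~$S$ incident to~$i$ has size at most $D\eps m$. Pad $B_i$ arbitrarily to size exactly $2D\eps m$ and set $V'_i:=V_i\setminus B_i$. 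The slicing property of $\eps$-regular pairs then gives, for every $ij\in E(R)$, that $(V'_i,V'_j)_{G'}$ is $2\eps$-regular of density $d^\dagger_{ij}\in[d_{ij}-\eps,d_{ij}+\eps]$; in particular $d^\dagger_{ij}>d-4D\eps$. Moreover for $ij\in E(S)$ each surviving $v\in V'_i$ has at least $(d_{ij}-\eps)m-2D\eps m\ge(d^\dagger_{ij}-(2D+2)\eps)|V'_j|$ neighbours in~$V'_j$, and symmetrically in the other direction.

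\emph{Edge deletion.} For each $ij\in E(R)$ independently, keep each edge of $(V'_i,V'_j)_{G'}$ with probability $p_{ij}:=(d-4D\eps)/d^\dagger_{ij}<1$. A Chernoff bound applied to each candidate pair $(X,Y)$ with $|X|\ge\sqrt\eps|V'_i|$ and $|Y|\ge\sqrt\eps|V'_j|$, followed by a union bound over the at most $4^m$ such pairs and the bounded number of edges of~$R$, shows that with positive probability every resulting pair is $\sqrt\eps$-regular with density within $\eps^{3/2}$ of $d-4D\eps$; a deterministic perturbation of at most $\eps^{3/2}m^2$ edges per pair then brings the density to exactly $d-4D\eps$ without destroying $\sqrt\eps$-regularity. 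A parallel Chernoff bound on individual vertex degrees shows that for every $ij\in E(S)$ and every $v\in V'_i$ the surviving degree into~$V'_j$ is at least $p_{ij}(d^\dagger_{ij}-(2D+2)\eps)|V'_j|-\eps^{3/2}m\ge(d-4D\eps-\sqrt\eps)|V'_j|$, using $\sqrt\eps\gg(2D+2)\eps$ from the hierarchy. This delivers the required $(\sqrt\eps,d-4D\eps)$-super-regularity on every edge of~$S$.

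The main obstacle is making all three conditions hold simultaneously: exact density $d-4D\eps$ on every edge of~$R$, $\sqrt\eps$-regularity on every such pair, and the super-regular min-degree condition on every edge of~$S$. The hierarchy $\eps\ll d\ll 1/D$ has been chosen precisely so that the $2D\eps$ fraction of deleted vertices and the $O(\eps^{3/2})$ Chernoff deviation both fit comfortably inside the $\sqrt\eps$ tolerance built into super-regularity; since $k\le M$ is bounded while $m$ grows with $n$, the union bound in the random argument closes up with room to spare.
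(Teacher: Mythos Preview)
The paper does not actually prove this proposition; it is quoted from~\cite{KKO}. Your two-stage approach---first delete the at most $D\eps m$ low-degree vertices from each cluster to obtain approximate super-regularity along edges of~$S$, then thin each surviving pair down to density exactly $d-4D\eps$---is the standard one and is almost certainly what the proof in~\cite{KKO} does as well. The vertex-deletion step is carried out correctly.

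One small point in the edge-deletion step deserves care. You write that after the random thinning the overall density is within $\eps^{3/2}$ of $d-4D\eps$, and then perturb by up to $\eps^{3/2}m^2$ edges to hit the target exactly. If those correcting edges were all placed inside a single sub-pair $(X,Y)$ with $|X|=|Y|=\sqrt\eps\,m'$, the density of $(X,Y)$ would shift by $\eps^{3/2}/\eps=\sqrt\eps$, which together with the $O(\eps)$ deviation already present would overshoot the $\sqrt\eps$-regularity tolerance. The fix is easy: Chernoff on the \emph{full} pair $(V'_i,V'_j)$ actually gives density within $O(m'^{-1/2})$ of the target (not merely $\eps^{3/2}$), so the perturbation involves only $O(m'^{3/2})$ edges and changes any $\sqrt\eps$-sized sub-pair density by $O(1/(\eps\sqrt{m'}))$, which is negligible. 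Alternatively one can randomise the correcting edges too. With this adjustment your argument goes through.
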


\section{The non-extremal argument}

This section covers the main part of the argument for the case when the original oriented graph
$G$ is not close to being the `extremal graph' described in Section~\ref{extremal}.
The following well-known fact will be very useful.

\begin{prop} \label{1-factor}
Suppose that~$G$ is a digraph such that $|N^+(S)| \ge |S|$ for
every $S \subset V(G)$. Then $G$ has a $1$-factor.
\end{prop}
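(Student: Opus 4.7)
The plan is to reduce the existence of a $1$-factor in $G$ to a perfect matching in an auxiliary bipartite graph, and then apply Hall's theorem.

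More precisely, given $G$ on vertex set $V$, I would form a bipartite graph $H$ with vertex classes $V^+ = \{v^+ : v \in V(G)\}$ and $V^- = \{v^- : v \in V(G)\}$, putting an edge between $u^+$ and $v^-$ precisely when $uv \in E(G)$. The key observation is that a perfect matching $M$ in $H$ corresponds bijectively to a $1$-factor of $G$: each vertex of $G$ has exactly one outgoing edge and exactly one incoming edge selected by $M$, and the resulting spanning subdigraph of $G$ in which every vertex has in- and outdegree~$1$ is a disjoint union of cycles covering $V(G)$.

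It then suffices to verify Hall's condition for the $V^+$-side of $H$. For any subset $T \subseteq V^+$, let $S \subseteq V(G)$ be the set of vertices corresponding to $T$. Then the neighbourhood of $T$ in $H$ is exactly $\{v^- : v \in N_G^+(S)\}$, so $|N_H(T)| = |N_G^+(S)| \ge |S| = |T|$ by hypothesis. Hall's theorem then yields a perfect matching in $H$, and hence a $1$-factor in $G$.

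I do not expect any real obstacle here: the content is entirely the translation into bipartite matching, after which Hall's theorem does all the work. (One could equivalently argue directly by applying the max-flow min-cut theorem on the network obtained by splitting each vertex of $G$ into an in-copy and out-copy and routing a unit of flow through each, but the Hall formulation is cleaner.)
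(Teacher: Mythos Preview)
Your argument is correct and is essentially identical to the paper's own proof: both build the auxiliary bipartite graph on two copies of $V(G)$ with edges encoding the arcs of $G$, observe that a perfect matching corresponds to a $1$-factor, and invoke Hall's theorem using the hypothesis $|N^+(S)|\ge |S|$.
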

\begin{proof}
The result follows immediately by applying Hall's matching
theorem to the following bipartite graph~$H$:
the vertex classes $A,B$ of~$H$ are both copies of the vertex set of the original digraph $G$ and
we connect a vertex $a \in A$ to $b \in B$ in~$H$ if there is a directed edge from $a$ to $b$
in~$G$. A perfect matching in~$H$ corresponds to a 1-factor in~$G$.
\end{proof}

\begin{lemma} \label{find-cycle} Suppose $1/n \ll
\eps_0 \ll 1/k \ll \eps \ll d \ll c \ll 1$, and that~$R$ is an oriented graph on $[k]$ such that
\begin{itemize}
\item $\delta^0(R) \ge 2ck$,
\item $|N^+_R(S)| \ge |S| + ck$ for any $S \subset [k]$ with $|S| \le (1-c)k$, and
\item ${\mathcal C}$ is a $1$-factor in $R$.
\end{itemize}
Also, suppose that
$R^*$ is an oriented graph obtained from $R$ by adding a set
$U_0$ of at most $\eps_0 n$ vertices and some edges so that
every $x \in U_0$ has both an inneighbour and an outneighbour in $[k]$.
Suppose $G^*$ is an oriented graph on $n$ vertices with vertex partition
$U_0 , U_1 , \dots , U_k$, such that
\begin{itemize}
\item[(i)] each $U_i$ is independent; 
\item[(ii)] if $ij$ is an edge on some cycle from ${\mathcal C}$, then the bipartite
graph $(U_i,U_j)$ consisting of all the $U_i$-$U_j$ edges in~$G^*$ forms an
$(\eps,d)$-super-regular pair,
while if $ij \in E(R)$, the bipartite graph $(U_i,U_j)$ forms an
$\eps$-regular pair of density $d$ in $G^*$, and finally $(U_i,U_j)$ is empty whenever
$ij\notin E(R)$;
\item[(iii)] for all $x \in U_0$ and $i \in [k]$, we have
$|N^+_{G^*}(x) \cap U_i| > c|U_i|$ whenever $xi \in E(R^*)$ and
$|N^-_{G^*}(x) \cap U_i| > c|U_i|$ whenever $ix \in E(R^*)$;
\item[(iv)] $|U_1|=\dots=|U_k|=:m$.
\end{itemize}
Then $G^*$ has a Hamilton cycle.
\end{lemma}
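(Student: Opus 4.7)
\nib{Proof plan.}
The strategy is to construct a framework: a closed walk $W$ in $R^*$ visiting each cluster $U_i$ exactly $|U_i|$ times and each exceptional vertex $x\in U_0$ exactly once, and then to realise $W$ as a genuine Hamilton cycle of $G^*$ by applying the Blow-up Lemma~\ref{blowup} to the super-regular pairs provided by the $1$-factor $\mathcal{C}$. The bulk of $W$ will follow $\mathcal{C}$, so that successive cluster transitions lie in super-regular pairs; a bounded number of extra $R$-edges are used to stitch the cycles of $\mathcal{C}$ into a single cluster-cycle, and the exceptional vertices are spliced in via the $R^*$-edges $i_x x$ and $xj_x$.

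\emph{Step 1: merging $\mathcal{C}$.} Writing $\mathcal{C}=\{C_1,\dots,C_t\}$, the expansion hypothesis $|N^+_R(S)|\geq|S|+ck$ (valid whenever $|S|\leq(1-c)k$) implies that $R$ is strongly connected of bounded diameter, and that any proper sub-union of cycles of $\mathcal{C}$ admits an $R$-edge leaving it. Using such edges I would stitch the $t$ cycles of $\mathcal{C}$ into a single closed walk $W_0$ on $[k]$ that covers each cluster exactly once and follows $\mathcal{C}$ except at $O(1/c)$ `merging edges' in $E(R)\setminus E(\mathcal{C})$.

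\emph{Step 2: absorbing $U_0$.} For each $x\in U_0$, fix $i_x,j_x\in[k]$ with $i_x x,\, x j_x\in E(R^*)$; by~(iii), $x$ has at least $cm$ inneighbours in $U_{i_x}$ and at least $cm$ outneighbours in $U_{j_x}$. Using the bounded diameter of $R$, I would choose a short directed path $Q_x$ in $R$ from $j_x$ to $i_x$ and splice a detour through $x$ (of the form $u_x\to x\to v_x\to Q_x\to\cdots$) into one occurrence of $i_x$ in the cluster-sequence underlying the intended Hamilton cycle. Since $|U_0|\leq\eps_0 n$ is much smaller than $m$, these detours can be chosen pairwise vertex-disjoint and use only $O(1)$ vertices per cluster. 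The overall framework $W$ then visits each cluster $m=|U_i|$ times and each $x\in U_0$ exactly once.

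\emph{Step 3: Blow-up, and the main obstacle.} After reserving the $O(|U_0|)$ vertices used by the detours and the merging edges of Step~1, I obtain subclusters $U_i'\subseteq U_i$ of sizes close to $m$, on which the $\mathcal{C}$-pairs remain super-regular (cf.~Proposition~\ref{superreg}). I would then verify conditions~(C1)--(C9) of Lemma~\ref{blowup}, taking $L_0=U_0$, so that (C7) follows from~(iii) and the slack from~(iv) supplies~(C8), and apply the Blow-up Lemma to obtain a Hamilton cycle of $G^*$ following $W$. The principal obstacle is the bookkeeping in Step~2: each detour or merging edge perturbs how many times certain clusters are visited, and these discrepancies must be corrected by further local reroutings of $W$ using additional edges supplied by the expansion hypothesis; arranging all these modifications consistently, while leaving enough super-regular structure intact for the Blow-up Lemma to apply, is the crux of the argument.
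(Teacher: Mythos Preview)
Your overall framework (build a closed walk in $R^*$ that dictates a Hamilton cycle shape, then invoke the Blow-up Lemma) matches the paper, but you are missing the key device that makes the bookkeeping disappear. The paper does \emph{not} try to merge the cycles of $\mathcal{C}$ into a single walk visiting each cluster once, nor does it correct imbalances after the fact. Instead it builds the walk $W$ entirely out of \emph{$\mathcal{C}$-shifted walks}: pieces of the form $a_1 C_1 a_1^- a_2 C_2 a_2^- \cdots a_t C_t a_t^-$, where each $C_s\in\mathcal{C}$ is traversed in full and $a_s^- a_{s+1}$ is an $R$-edge. Because every such piece winds completely around every cycle it touches, the resulting walk is automatically \emph{balanced}: for each $C\in\mathcal{C}$, all vertices of $C$ are visited the same number $m_C$ of times. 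The expansion hypothesis is used exactly here, to show that any two clusters are joined by a $\mathcal{C}$-shifted walk traversing at most $c^{-1}+1$ cycles; this is how all of $[k]$ and all of $U_0$ are reached. Once $W$ is balanced, one replaces a single full traversal of each $C$ by $m-m_C+1$ windings around its blow-up, which exhausts every cluster exactly; there is no residual imbalance to repair.

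By contrast, your Step~1 already runs into trouble: stitching the cycles of $\mathcal{C}$ with single $R$-edges into a walk that hits each cluster exactly once amounts to finding a Hamilton cycle in $R$, which is not given, and choosing entry/exit points to make this work requires edges at specific vertices that expansion alone does not supply. More seriously, in Step~2 every detour through an exceptional vertex, and every short $R$-path $Q_x$, adds unequal numbers of visits to different clusters on the same cycle of $\mathcal{C}$; you flag this as ``the principal obstacle'' but offer no mechanism to fix it. The shifted-walk idea is precisely that mechanism, and without it the argument does not close.
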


\begin{proof}
Throughout the proof of the lemma, given a cycle $C\in {\mathcal C}$ and a vertex $x \in C$,
we will denote the predecessor of $x$ on $C$ by $x^-$ and its successor by $x^+$.
We extend this convention
to sets of vertices in the natural way. We say that a walk $W$ in~$R^*$ is \emph{balanced} with
respect to ${\mathcal C}$ if for each cycle $C\in {\mathcal C}$ every vertex on $C$
is visited the same number of times by~$W$.
We will first construct a closed walk $W$ in $R^*$ such that
\begin{itemize}
\item[(a)] $W$ is balanced with respect to ${\mathcal C}$;
\item[(b)] $W$ visits every vertex in $[k]$ at least once and at most $\sqrt{\eps_0} n$ times;
\item[(c)] $W$ visits every vertex in $U_0$ exactly once;
\item[(d)] any two vertices in $U_0$ are at distance at least $4$ along $W$.
\end{itemize}
To achieve the balance property (a), we will build up $W$ from certain special walks
(which are themselves balanced with respect to ${\mathcal C}$):
a \emph{${\mathcal C}$-shifted walk joining $x$ to $y$ and traversing~$t$ cycles from~$\C$}
is%
       \COMMENT{changed def since length of a walk is usually the number of edges}
a walk of the form
$a_1C_1a_1^-a_2C_2a_2^- \dots a_t C_t a_t^-$, where $x=a_1$, $y=a_t^-$ and
the $C_i$ are (not necessarily distinct) cycles from ${\mathcal C}$.
(Note that this definition is slightly different from the corresponding one in~\cite{KKO}.)

We claim that for any two vertices
$x,y \in [k]$ there is a ${\mathcal C}$-shifted walk in $R$ joining $x$ to~$y$
which traverses at most $c^{-1}+1$ cycles from~$\C$.
To see this, let $S_i \subset [k]$ be the set of vertices
that can be reached from $x$ by a $\C$-shifted walk traversing at most~$i$ cycles from~$\C$.
Then $S_1=\{x^-\}$ and $S_{i+1}= (N^+_R(S_i))^-$ for $i \ge 1$.
By assumption on $R$,  if $|S_i| \le (1-c)k$ then we have
$|S_{i+1}| \ge |S_i| + ck$. So if $i \ge c^{-1}$ we have
$|S_i| \ge (1-c)k$. The minimum semi-degree condition on $R$ implies that
$|N^-_R(y^+)| \ge 2ck$.
So there is a vertex $z \in S_i \cap N^-_R(y^+)$ and we can
reach $y$ by following a $\C$-shifted walk to~$z$ which traverses at most~$i$ cycles
from~$\C$ and then
adding $zy^+Cy$ (where~$C$ is the cycle from $\mc{C}$ containing~$y$.)

For each pair $x,y$ let $W(x,y)$ denote such a $\C$-shifted walk.
To construct~$W$, we start at vertex $1 \in [k]$ and follow the
walks%
    \COMMENT{Previous def wasn't quite working}
$W(1,2^-), 2^-2,W(2,3^-),3^-3, \dots, W(k-1,k^-)$. Note that altogether these walks yield a
single (balanced) ${\mathcal C}$-shifted walk joining $1$ to $k^-$ which visits every vertex of
$R$ at least once. Next we aim to extend this walk to incorporate the vertices in $U_0$.
For this, suppose that $U_0=\{v_1,\dots,v_q\}$, say.
For each $i \in [q]$, we now select vertices
$w_i \in N^+_{R^*}(v_i)$, $u_i \in N^-_{R^*}(v_i)$%
     \COMMENT{Complete Disjointness is not achievable and not needed. Requiring
$w_i \neq u_{i+1}$ may make things easier for the reader though}
and follow the walks $W(k,u_1)$, $u_1 v_1 w_1$,
$W(w_1,u_2)$, $u_2 v_2 w_2$, $\dots$, $W(w_{q-1},u_q)$,
$u_q v_q w_q$. Finally we close the walk by following $W(w_q,1^-)$ and the edge~$1^-1$.
Note that the resulting closed walk
$W$ is balanced with respect to~$\C$ since this holds for each of the walks~$W(*,*)$.
Moreover,~$W$ traverses at least one cycle of ${\mathcal C}$ between each visit to $U_0$ (by definition),
so any two vertices in $U_0$ are at distance at least $4$ along $W$. Every vertex in
$U_0$ is visited exactly once, and vertices in $[k]$ are visited at most
$(k+|U_0|)(c^{-1}+1) < \sqrt{\eps_0} n$ times, so $W$ has the required properties (a)--(d).

We will use the Blow-up Lemma (Lemma~\ref{blowup}) to `transform' $W$ into a
Hamilton cycle of $G^*$. As a preliminary step, it will be useful to consider an
auxiliary (undirected) graph $K^*$ which it is obtained from $G^*$ by replacing every
$\eps$-regular pair $(U_i,U_j)$ of nonzero density (i.e.~those which correspond to edges of $R$)
by a complete bipartite graph. The neighbourhoods of the
vertices in $U_0$ are not affected.
We can find a Hamilton cycle $C_{{\rm Ham}}$ in $K^*$ as follows.
First we find a one-to-one mapping of $W$ to a cycle $W'$ in $K^*$ such that
each vertex of $U_0$ is mapped to itself and for each $i \in [k]$
visits of $W$ to a vertex $i\in R$ are mapped to distinct vertices in $U_i$;
to do this we first greedily select distinct images for $u_i$ and $w_i$ for all $i \in [q]$,
which is possible by assumption (iii) and the inequality
$|U_0|\le \eps_0 n \le cn/(5k) \le c|U_i|/4$. We can then complete the rest of the mapping
arbitrarily since~$W$ visits every vertex $i\in R$ at most $\sqrt{\eps_0}n$ times.
Next we will extend $W'$ to a Hamilton cycle.
For each cycle $C \in {\mathcal C}$, let $m_C$ denote the number of times that
$W$ visits a vertex $c\in C$ (note that this number is the same for all vertices $c\in C$).
Fix one particular occasion on which $W$ `winds around' $C$ and replace the corresponding
path in $W'$ by a path $P_C$ with the same endpoints that winds around the
`blow-up' $\bigcup_{c \in C} U_c$ of~$C$ in
$K^*$ exactly $m-m_C+1$ times, so that it exhausts all the vertices in $\bigcup_{c \in C} U_c$.
Since $\mc{C}$ is a $1$-factor the resulting cycle
$C_{{\rm Ham}}$ uses all vertices in~$K^*$, i.e.~it is Hamiltonian.
For future reference we note that
the number of times that $P_C$ winds around $\bigcup_{c \in C} U_c$ is
$m-m_C+1 \ge m -\sqrt{\eps_0} n \ge m - \sqrt{\eps_0} 2mk \ge (1-\eps)m$.
(Here we used property~(b) and $\eps_0 \ll 1/k,\eps$.)

Now we use the version of the Blow-up Lemma by Csaba (Lemma~\ref{blowup})
to show that~$C_{{\rm Ham}}$ corresponds to a Hamilton cycle in $G^*$.
We fix additional constants $\eps',\delta'$ with $\eps \ll \eps' \ll \delta' \ll d$.
We will apply the Blow-up Lemma with~$H$
being the underlying graph of $C_{{\rm Ham}}$ and
$G'$ being obtained from the underlying graph of $G^*$
as follows: if $xi \in E(R^*)$ with $x \in U_0$ and $i \in [k]$,
then delete all those edges which correspond to edges oriented from $U_i$ to $x$;
similarly, if $ix \in E(R^*)$ with $x \in U_0$ and $i \in [k]$,
then delete all those edges which correspond to edges oriented from $x$ to $U_i$.
This deletion ensures that when we embed $C_{{\rm Ham}}$ as an undirected cycle in $G'$ the directions
of the available edges will in fact make it a directed cycle in $G^*$.
For all $i \ge 0$, $U_i$ will play the role
of $V_i$ in the Blow-up Lemma and we take~$L_0,L_1,\dots,L_k$ to
be the partition of~$H$ induced by~$V_0,V_1,\dots,V_k$. $\phi: L_0\to V_0$ will be the
obvious bijection (i.e.~the identity). To define the set $I\subseteq V(H)$ of vertices
of distance at least~$4$ from each other which is used in the Blow-up Lemma, for each
$C \in \mc{C}$ let $P'_C$ be the subpath of~$H$ corresponding to the path $P_C$
(defined in the construction of $C_{{\rm Ham}}$).
Note that $|P'_C|=|P_C| \ge (1-\eps)m|C|$.
For each $i=1,\dots,k$, let $C_i\in \C$ denote the cycle containing~$i$ and let
$J_i\subseteq L_i$ consist of all those vertices in~$L_i\cap V(P'_{C_i})$
which have distance at least~4
from the endvertices of~$P'_{C_i}$. Thus in the graph~$H$
each vertex $u\in J_i$ has one of its neighbours in the set~$L^-_i$
corresponding to the predecessor
of~$i$ on~$C_i$ and its other neighbour in the set~$L^+_i$
corresponding to the successor of~$i$ on~$C_i$.
Moreover, all the vertices in~$J_i$ have distance at least~4 from all the vertices in~$L_0$
and the lower bound on  $|P_C|$ implies that $|J_i| \ge 9m/10$.
It is easy to see that one can greedily choose a set $I_i\subseteq J_i$
 of size $m/10$ such that the vertices in
$\bigcup_{i=1}^k I_i$ have distance at least~4 from each other.%
     \COMMENT{Indeed, suppose that we are about to choose~$I_i$.
For each of the at most 6 clusters~$V_j$
which lie on~$C_i$ and have distance at most~3 from~$i$ on~$C_i$ the set~$I_j$ forbids at most
$|I_j|$ vertices in~$J_i$ (if~$I_j$ was chosen before~$I_i$). This leaves us with
$9m/10-6m/10=3m/10$ vertices from~$J_i$. As the vertices in~$J_i$ have distance at least~3
from each other, we can choose every second of the remaining vs in~$J_i$, ie get a set
of at least $3m/20>m/10$ vertices.}
Let $I:=L_0\cup \bigcup_{i=1}^k I_i$.

Let us now check conditions~(C1)--(C9). (C1) holds with~$K_1:=1$ since
$|L_0|=|U_0|\le \eps_0 n\le dn$. (C2) holds by definition of~$I$.
(C3) holds since~$H$ is a Hamilton cycle in~$K^*$ and $K^*$ inherits the independence
property (i) in the statement of the lemma from $G^*$. This also implies that for every edge $xy\in H$
with $x\in L_i,y\in L_j$ ($i,j\ge 1$) we must have that $ij\in E(R)$
(provided that the direction of~$xy$ was chosen such that $xy\in C_{{\rm Ham}}$).
Thus~(C6) holds
as every edge of~$R$ corresponds to an $\eps$-regular pair of clusters having density~$d$.
(C4) holds with $K_2:=1$ because
\[|N_H(L_0)\cap L_i|\leq 2|L_0|= 2|U_0| \le 2\eps_0 n \le dm.
\]
(The final inequality follows from the fact that $\eps_0 \ll 1/k,d$.)
For~(C5) we need to find a set $D\subseteq I$ of buffer vertices. Pick
any set $D_i\subseteq I_i$ with $|D_i|=\delta' m$ and let $D:=\bigcup_{i=1}^k D_i$.
Since $I_i\subseteq J_i$ we have that $|N_H(D)\cap L_j|=2\delta'm$
for all $j=1,\dots,k$. Hence
$$||N_H(D)\cap L_i|-|N_H(D)\cap L_j||=0
$$
for all $1\leq i<j\leq k$ and so~(C5) holds. (C7) holds by assumption (iii) on $R^*$
and the fact that our walk $W$ used only edges of $R^*$.

(C8) and~(C9) are now the only conditions we need to check. Given a set $E_i\subseteq V_i=U_i$
of size at most~$\eps' m$, we wish to find
$F_i\subseteq (L_i\cap(I\setminus D))=I_i\setminus D$ and a
bijection $\phi_i:E_i\rightarrow F_i$ such that every $v\in E_i$ has a large number
of neighbours in every cluster~$V_j$ for which~$L_j$ contains a neighbour of~$\phi_i(v)$. Pick any set
$F_i\subseteq I_i\setminus D$ of size~$|E_i|$. (This can be done since
$|D\cap I_i|=\delta' m$ and so $|I_i\setminus D|\ge
m/10-\delta' m\gg \eps' m$.) Let $\phi_i:E_i\rightarrow F_i$ be an arbitrary bijection.
To see that~(C8) holds with these choices, consider any vertex $v\in E_i\subseteq V_i$
and let~$j$ be such that~$L_j$ contains a neighbour of~$\phi_i(v)$ in~$H$. Since
$\phi_i(v)\in F_i\subseteq I_i\subseteq J_i$, this means that~$j$ must be a neighbour
of~$i$ on the cycle $C_i\in \C$ containing~$i$. But this implies that
$|N_{G'}(v)\cap V_j|\ge (d-\eps)m$ since each edge of the union~$\bigcup_{C\in \C} C\subseteq R$
of all the cycles from~$\C$ corresponds to an $(\eps, d)$-super-regular pair in~$G'$.

Finally, writing $F:= \bigcup_{i=1}^k F_i$ we have
\[|N_H(F)\cap L_i|\le 2\eps'm\]
(since $F_j\subseteq J_j$ for each $j=1,\dots,k$) and so~(C9) is satisfied with $K_3:=2$.
Hence (C1)--(C9) hold and so we can apply the Blow-up Lemma to obtain a Hamilton cycle
in~$G'$ such that the image of~$L_i$ is~$V_i$ for all $i=1,\dots,k$
and the image of each $x\in L_0$ is $\phi(x)\in V_0=U_0$. By construction of $G'$
this undirected Hamilton cycle corresponds to a directed Hamilton cycle in~$G^*$.
\end{proof}

\section{Proof of Theorem~\ref{main}}

Define $M',M'_0\in \mathbb{N}$ and additional constants so that
$$1/n_0 \ll 1/M'_0\ll \eps_0 \ll d_0\ll 1/M'\ll \eps \ll d \ll c \ll \eta \ll 1.$$
Suppose $G$ is an oriented graph on $n \ge n_0$ vertices with minimum
semi-degree $\delta^0(G) \ge \frac{3n-4}{8}$ and no Hamilton cycle.
Apply the Diregularity Lemma (Lemma~\ref{direg}) to $G$ with parameters $\eps^2/3, d,M'$
to obtain a partition $V_0,V_1, \dots, V_k$ of $V(G)$ with $k\ge M'$.
Let~$R$ be the reduced oriented graph with parameters $(\eps^2/3, d)$ given by
applying Lemma~\ref{red-orient} and let~$G^*$ be the pure oriented graph.%
     \COMMENT{In the proof of Lemma~\ref{cyclelemma} $G^*$ will be the subgraph of
the pure digraph corresponding to~$R$. The $\eps$ in Lemma~\ref{cyclelemma} will
play the role of $\sqrt{3\eps}$ here.}
Lemma~\ref{red-orient}(a) and (b) implies
\begin{equation}\label{eq:minR}
\delta^0(R) > (3/8 - 1/(2n) - d - \eps^2)k > (3/8-2d)k.
\end{equation}

We divide the remainder of the argument into two cases, according to
whether there is a set%
      \COMMENT{Actually, the $2cn$ (instead of just $cn$) in Case~1 looks a bit funny now,
but by leaving it as it is we don't have to modify and recheck numbers in Case~2.}
$S \subset [k]$ with $k/3 < |S| < 2k/3$ and
$|N^+_R(S)| < |S| + 2ck$.

\medskip

\nib{Case 1.} \emph{$|N^+_R(S)| \ge |S| + 2ck$ for every $S \subset [k]$
with $k/3 < |S| < 2k/3$.}

\medskip

Note that if $0 \ne |S| \le k/3$ then $|N^+_R(S)| \ge \delta^+(R) > |S|+d^2 k$ (say), and if
$|S| > 2k/3$ then $|S| + |N^-_R(i)| > k$ for any $i \in [k]$, i.e.
$S \cap N^-_R(i) \ne \emptyset$, and $N^+_R(S)=[k]$.
So $|N^+_R(S)| \ge |S|$ for every $S \subset [k]$, and
Proposition~\ref{1-factor} implies that there is a
$1$-factor ${\mathcal C}$ in $R$. In fact,%
      \COMMENT{In the proof of Lemma~\ref{cyclelemma} all of this still works since
$d\ll \nu\ll \tau\ll \eta$.}
\begin{equation}\label{eq:expS}
|N^+_R(S)| \ge |S| + d^2 k
\end{equation}
for every $S \subset [k]$ with $d^2k < |S| < (1-d^2)k$.

We would now like to apply Lemma~\ref{find-cycle} to $R$, ${\mathcal C}$ and $G^*$
to find a Hamilton cycle. However, the size of the exceptional set $V_0$ may be larger
than the number of exceptional vertices allowed for by the lemma.
To deal with this problem, we first partition $V(G)\setminus V_0$ into sets $A$ and $B$.
Then we further refine the above `regularity partition' within $A \cup V_0$ to obtain a very small
`exceptional set' $A_0''$. It then turns out that we can use Lemma~\ref{find-cycle}
to find a
Hamilton path in $(A \cup V_0) \setminus A_0''$ and a Hamilton cycle in $B \cup A_0''$
which together form a Hamilton cycle in $G^*$.

By adding at most $k$ vertices to $V_0$ we may assume that each $V_i$ contains
an even number of vertices.

\medskip

\nib{Claim 1.1.} \emph{There is a partition of $V \sm V_0$ into sets $A$ and $B$
which has the following properties:%
    \COMMENT{need that the $|A_i|$ have the same size for Prop.~\ref{superreg}.
Simply discarding vertices into $V_0$ would be simplest solution but would make statement
of Claim 1.1 uglier}
\begin{itemize}
\item $|A_i|, |B_i| = \frac{1}{2}|V_i|$, where we write $A_i := V_i \cap A$ and
$B_i := V_i \cap B$ for every $i \in [k]$;
\item $|N^+_G(x) \cap A_i|, |N^+_G(x) \cap B_i| = \frac{1}{2}|N^+_G(x) \cap V_i| \pm n^{2/3}$ for every
vertex $x\in G$; and similarly for $N^-_G(x)$;
\item $R$ is the oriented reduced graph with parameters $(\eps^2,3d/4)$ corresponding
to the partition $A_1,\dots,A_k$ of the vertex set of~$G^*[A]$; this also holds for the
partition $B_1,\dots,B_k$.
\end{itemize}}

\begin{proof}
For each cluster~$V_i$ consider a random partition of~$V_i$ into two sets~$A_i$ and~$B_i$
obtained by assigning a vertex $x\in V_i$ to~$A_i$ with probability~$1/2$ independently
of all other vertices in~$V_i$. So $A:=\bigcup_{i=1}^k A_i$ and $B:=\bigcup_{i=1}^k B_i$.
Then the probability that $|A_i|=|B_i|$ is at least $1/(3\sqrt{|V_i|})$
(see e.g.~\cite[p.~6]{BollobasRG}).
Also, standard Chernoff bounds imply that the probability that there is a
vertex $x\in G$ whose degree in~$A_i$ is too large or too
small is exponentially small in~$|V_i|$. So with non-zero probability we obtain a
partition as desired.
To see the third property, note that the definition of regularity implies that
the pair $(A_i,A_j)_{G^*}$ consisting of all the $A_i$-$A_j$ edges in~$G^*$ is $\eps^2$-regular
and has density at least $3d/4$ whenever $ij\in E(R)$. On the other hand, $(A_i,A_j)_{G^*}$ is
empty whenever $ij\notin E(R)$ since $(V_i,V_j)_{G^*}\supset (A_i,A_j)_{G^*}$ is empty in this
case.
\end{proof}

\medskip

Apply Proposition~\ref{superreg}, with  $G^*[B]$ playing the role of $G^*$,
${\mathcal C}$ playing the role of $S$ (so $D=2$), the $B_i$ playing the role of the $V_i$ and
$\eps,d$ replaced by $\eps^2,3d/4$. We obtain an (oriented) subgraph $G_B'$ of $G^*[B]$
and sets $B_i'\subseteq B_i$ of size $4\eps^2|B_i|$ so that a pair $(B_i',B_j')_{G'_B}$
is $\eps$-regular of density $3d/4-8\eps^2 \ge d/2$
whenever $ij \in E(R)$, and so that $(B_i',B_j')_{G'_B}$ is $(\eps,d/2)$-super-regular%
     \COMMENT{Get $(\eps,3d/4-8\eps^2)$-super-regularity from Proposition~\ref{superreg}, but this
implies $(\eps,d/2)$-super-regularity}
whenever $ij$ is an edge of~$\C$.
Let $B':=B_1'  \cup \dots \cup B_k'$. So $V(G'_B)=B'$. Let $V_0'$ be obtained from
$V_0$ by adding the vertices in $B_i \setminus B_i'$ for all $i\in [k]$. Since
$|V_0| \le (\eps^2/3)n+k$ and $|B_i \sm B_i'| \le 4\eps^2|B_i|$ we have
$$|V_0'| < \eps n.$$

\medskip

\nib{Claim 1.2.} \emph{Each set $A_i$ ($i\in [k]$) contains a set~$U_i$ of size $2\eps |A_i|$
such that $\delta^0(G^*[A\setminus U])\ge dn/8$, where%
      \COMMENT{In the proof of Lemma~\ref{cyclelemma} we get $\delta^0(G^*[A\setminus U])\ge d\eta n/3$.}
$U:=\bigcup_{i=1}^k U_i$.}

\begin{proof}
Consider any cluster~$A_i$. Claim~1.1 implies that for all $j\in N^+_R(i)$ at most
$\eps^2 |A_i|$ vertices in~$A_i$
have less than $d|A_j|/2$ outneighbours in~$A_j$. Call these vertices \emph{useless for~$j$}.
So on average any vertex of~$A_i$ is useless for at most $\eps^2|N^+_R(i)|$ indices $j\in N^+_R(i)$.
Thus at most $\eps |A_i|$ vertices in~$A_i$ are useless for more than $\eps |N^+_R(i)|$ indices
$j\in N^+_R(i)$. Let $U'_i\subseteq A_i$ be a set of size $\eps |A_i|$ which contains all
these vertices and some extra vertices from~$A_i$ if necessary. Similarly, there is
a set $U''_i\subseteq A_i\setminus U'_i$ of size $\eps |A_i|$ such that every vertex
$v\in A_i\setminus U''_i$ has at least $d|A_j|/2$ inneighbours in~$A_j$ for all but at most
$\eps |N^-_R(i)|$ indices $j\in N^-_R(i)$. Let $U_i:=U'_i\cup U''_i$ and $U:=\bigcup_{i=1}^k U_i$.
Thus $|U|=2\eps |A|$ and every vertex $v\in A_i\setminus U_i$ satisfies%
     \COMMENT{In the proof of Lemma~\ref{cyclelemma} we get $d\eta n/3$ instead of $dn/8$.}
$$
d^+_{G^*[A\setminus U]}(v)\ge \frac{d}{2}(1-\eps)|N^+_R(i)||A_i|-|U|
\stackrel{(\ref{eq:minR})}{\ge} \frac{dn}{8}.
$$
The same bound holds for $d^-_{G^*[A\setminus U]}(v)$ and thus
$\delta^0(G^*[A\setminus U])\ge dn/8$.
\end{proof}

\medskip

Add all the vertices in~$U$ to~$V'_0$ to obtain a new set~$V''_0$ with
$$|V''_0|<3\eps n.
$$
Let~$G'_A$ be the subgraph of $G$ obtained from~$G^*[A\setminus U]$ by adding the set~$V''_0$
and setting the out- and inneighbourhoods of any $v\in V''_0$ in~$G'_A$ equal
to $N^+_{G}(v)\cap (A\setminus U)$ and $N^-_{G}(v)\cap (A\setminus U)$. Thus%
     \COMMENT{In the proof of Lemma~\ref{cyclelemma} we get $\delta^0(G'_A)\ge d\eta n/3$.}
$\delta^0(G'_A)\ge d|G'_A|/8$ by Claims~1.1 and~1.2.
In what follows, we will still write~$A_i$ for $A_i\setminus U_i$
and $A$ for $A\setminus U$. Then in~$G'_A$ each pair $(A_i,A_j)_{G'_A}$ with $ij\in E(R)$
is still $\eps$-regular and has density at least~$d/2$.

Apply the Diregularity Lemma (Lemma~\ref{direg}) to~$G'_A$ with
parameters $\eps_0^2$, $2d_0$, $M'_0$ and initial partition $V_0'',  A_1, \dots ,A_k$
to obtain a reduced digraph~$R'_A$ and a partition $A_0', A_1', \dots, A_\ell'$
(where the exceptional set is $A_0'$) which refines $V_0'',  A_1, \dots ,A_k$.
Apply Lemma~\ref{red-orient} to~$G'_A$ (playing the role of~$G$), $R'_A$ (playing the role
of~$R'$) and $\eps_0^2,2d_0,M'_0$ (playing the roles of~$\eps,d,M'$).
We obtain a reduced oriented graph~$R_A$ with parameters $(\eps^2_0,2d_0)$
such that%
      \COMMENT{In the proof of Lemma~\ref{cyclelemma} get $\delta^0(R_A) \ge d\eta \ell/4$.}
$$\delta^0(R_A) \ge d\ell/8-(3\eps_0^2+2d_0)\ell\ge d\ell/10$$
(this is what we need Claim~1.2 for).
Let~$G''_A$ denote the corresponding pure oriented graph.

\medskip

\nib{Claim 1.3.}  \emph{For any $T \subset [\ell]$ with $2d^2\ell < |T| < (1-2d^2)\ell$
we have $|N^+_{R_A}(T)| \ge |T| + d^2\ell/2$.}

\begin{proof}
Let $m := |A_i| = |A|/k$, for $1 \le i \le k$, and $m':=|A_x'|$, for $1 \le x \le \ell$.
Then $|A| + |V_0''| = \ell m' + |A_0'|$. Since $|V_0''| \le 3\eps n$ we get
$m' \le (|A| + 3\eps n)/\ell \le (1+7\eps)|A|/\ell$, and since
$|A_0'| \le \eps_0^2(|A|+|V_0''|) < \eps_0^2 n$
we get $m' \ge (|A| - \eps_0^2 n)/\ell \ge (1-3\eps_0^2)|A|/\ell$.
For $i \in [k]$ let $L_i := \{x \in [\ell] \colon A_x' \subset A_i \}$.
Then
$$
|L_i| \le \frac{m}{m'} \le \frac{km+|V_0''|}{km'} = \frac{\ell m' + |A_0'|}{km'}
 < \frac{\ell}{k} \left(1 + \frac{\eps_0^2 n}{(1-3\eps_0^2)|A|} \right) < (1+3\eps_0^2)\ell/k.
$$
Let $Z := \{i \in [k]: |L_i| < (1-10\eps)\ell/k\}$.
We will show that $Z$ is comparatively small. For this,
note that if $i \in Z$, then
$$
|A_i \setminus A_0'| = |L_i|m' \le (1-10\eps)(\ell/k)(1+7\eps)|A|/\ell
\le (1-\eps)|A|/k.
$$
Since $|A_i|=m=|A|/k$, this in turn shows that $|A_i \cap A_0'|\ge \eps |A|/k$.
So $|Z| \ge \eps k$ would imply that $|A_0'| \ge |Z| \eps |A|/k \ge \eps^2 |A| > \eps_0^2 n$,
contradiction. Thus we must have $|Z| \le \eps k$.

Let $T_i := T \cap L_i$, $S := \{i \in [k] \colon  |T_i| > d^3\ell/k \}$
and  $T' := \bigcup_{i \in S} T_i$.
Then $|T_i| \le |L_i| \le (1+3\eps_0^2)\ell/k$ and
$|T \sm T'| \le (k-|S|)d^3\ell/k+|\{x\in [\ell]: A'_x\subset V_0''\}|
< (d^3+7\eps) \ell$
imply that
\begin{equation}\label{eq:sizeS}
|S|\ge \frac{|T'|}{(1+3\eps_0^2)\ell/k} > |T|k/\ell - 2d^3k.
\end{equation}
Therefore $|S| > d^2k$ and so~(\ref{eq:expS}) implies that either $|S|\ge (1- d^2)k$ or
$|N^+_R(S)| \ge |S|+d^2k$. Suppose first that the latter holds. We will now show
that for all $i \in S$ and all $j \in N^+_R(i)$ we have
\begin{equation}\label{eq:Lj}
|L_j\sm  N^+_{R_A}(T)| < d^3\ell/k.
\end{equation}
Indeed, if not then $W_i:=\bigcup_{x \in T_i} A_x' \subset A_i$ and
$W_j:=\bigcup_{x \in L_j \sm N^+_{R_A}(T)} A_x' \subset A_j$ would span a bipartite
subgraph~$(W_i,W_j)_{G'_A}$ of $(A_i,A_j)_{G'_A}$ with vertex class sizes
$|W_i|,|W_j|\ge |A'_x|d^3\ell/k> \eps m$. Note that $e_{R_A}(T_i, L_j \sm N^+_{R_A}(T))=0$.
So Lemma~\ref{red-orient}~(c) applied
to~$T_i$ (playing the role of~$S$) and $L_j \sm N^+_{R_A}(T)$ (playing the role of~$T$)
shows that
$$e_{G'_A}(W_i,W_j)<6d_0 |G'_A|^2\le 6d_0 n^2\le 7d_0 (mk)^2\le \frac{7d_0 k^2}{\eps^2} |W_i||W_j|
\le \frac{d}{4} |W_i||W_j|,
$$
where the last inequality holds since $d_0\ll \eps$ and thus also $d_0\ll 1/k$.
This contradicts the fact that $(A_i,A_j)_{G'_A}$ is an $\eps$-regular pair of density at
least $d/2$, which proves~(\ref{eq:Lj}). Therefore
\begin{align*}
|N^+_{R_A}(T)| & \stackrel{(\ref{eq:Lj})}{>} \sum_{j \in N^+_R(S)} (|L_j| - d^3\ell/k)
>(|N^+_R(S)|-|Z|)(1-2d^3)\ell/k\\
& >(|S|+d^2k-\eps k)(1-2d^3)\ell/k \stackrel{(\ref{eq:sizeS})}{>} |T| + d^2\ell/2.
\end{align*}
The argument in the case when $|S| \ge (1-d^2)k$ is almost the same:
as mentioned at the beginning of Case~1 we now have $N^+_R(S) = [k]$ and so
$|N^+_{R_A}(T)| > (k-\eps k)(1-2d^3)\ell/k > (1-3d^3)\ell > |T| + d^2\ell/2$. \end{proof}

\medskip

Now the same argument that we used to find the $1$-factor $\mc{C}$ in $R$ gives
us%
     \COMMENT{In the proof of Lemma~\ref{cyclelemma} this still works since
$\delta^0(R_A) \ge d\eta \ell/4$ and so~$R_A$ is still expanding as $d\eta\gg d^2$.}
a $1$-factor $\mc{C}_A$ in $R_A$. Apply Proposition~\ref{superreg}, with
$G''_A$ playing the role of $G^*$,
$\mc{C}_A$ playing the role of~$S$ (so $D=2$), the $A_i'$ playing the role of the $V_i$ and
$\eps,d$ replaced by $\eps_0^2,2d_0$. We obtain an (oriented) subgraph $G_A'''$ of $G''_A$
and sets $A_i''\subseteq A'_i$ so that a pair $(A_i'',A_j'')_{G_A'''}$ is $\eps_0$-regular of density
$2d_0-8\eps_0^2 \ge d_0$ whenever $ij \in E(R_A)$, and so that $(A_i'',A_j'')_{G_A'''}$ is
$(\eps_0,d_0)$-super-regular whenever $ij$ is an edge
of $\C_A$. Let $A' := A_1'' \cup \cdots \cup A_\ell''$ and let~$A_0''$ be the set obtained from~$A_0'$
by adding all the vertices in $A'_i\setminus A''_i$ for all $i\in [\ell]$. Then $V(G_A''')=A'$
and
$$
|A_0''| < \eps_0^2 n + 4\eps_0^2 n < \eps_0 n.
$$

Let~$G^*_B$ be the subgraph of $G$ obtained from~$G'_B$ by adding the set~$A''_0$
and setting the out- and inneighbourhoods of any $v\in A''_0$ in~$G^*_B$ equal
to $N^+_{G}(v)\cap B'$ and $N^-_{G}(v)\cap B'$. We will apply Lemma~\ref{find-cycle} to
find a Hamilton cycle in $G_B^*$. For this, note that the reduced oriented graph $R$ defined above
satisfies the initial two conditions of Lemma~\ref{find-cycle} (c.f.~the discussion just before the
statement of Claim~1.1). Moreover, we will let the vertex partition $B_1',\dots,B_k'$
of $G_B^*-A_0''$ play the role of $U_1,\dots,U_k$ in the lemma.%
      \COMMENT{In the proof of Lemma~\ref{cyclelemma} $\nu$ will play the role of~$c$ in
Lemma~\ref{find-cycle}.}
The role of $U_0$ will be played by $A_0''$.
Now define an oriented graph $R^*_B$ with vertex
set $[k] \cup A_0''$ as follows. The restriction of $R^*_B$ to
$[k]$ is $R$. For each $x \in A_0''$ add an edge from $x$ to
$i \in [k]$ if $|N^+_{G}(x) \cap B'_i| > c|B'_i|$ and%
     \COMMENT{In the proof of Lemma~\ref{cyclelemma} have $>\nu |B'_i|$ instead of $c|B'_i|$.}
an edge from $i \in [k]$ to $x$ if $|N^-_{G}(x) \cap B'_i| > c|B'_i|$.
Using Claim~1.1, it is easy to see that
$x$ has indegree and outdegree at least%
    \COMMENT{Replaced $O(c)$ by $\sqrt{c}$ since $O(c)$ is `introduced' only later.
Also, in the proof of Lemma~\ref{cyclelemma} have $>(2\eta-\sqrt{\nu})k$ instead of
$(3/8-\sqrt{c})k$.}
$(3/8-\sqrt{c})k$. However there may be double edges, so delete edges to obtain an
oriented graph (which we still call $R^*_B$) in which each $x\in A''_0$ has indegree and outdegree at least
$(3/16-\sqrt{c})k \ge k/10$ (say, although we only need these degrees to be non-zero).
Now applying Lemma~\ref{find-cycle} to the oriented graphs $R$, $R^*_B$ (playing the role of $R^*$)
and $G_B^*$ (playing the role of $G^*$)
gives a directed Hamilton cycle $C_B$ in $G_B^*$.

Choose any vertex $v$ of $G_B^*$ and let $v^+$ be the vertex that succeeds
$v$ on $C_B$. Let $G^*_A$ be the digraph obtained from $G_A'''$ by
adding a new vertex $v_*$ which has the following neighbours:
the outneighbourhood of $v_*$ is $N^+_{G}(v) \cap A'$ and the inneighbourhood
is $N^-_{G}(v^+) \cap A'$.
Let $R_A^*$ be the digraph obtained from
$R_A$ by adding a new vertex $v_*$ and edges as follows.
Add an edge from $v_*$ to $i \in [\ell]$ if
$|N^+_{G}(v) \cap A_i'| > c|A_i'|$ and an edge from $i \in [\ell]$ to $v_*$
if $|N^-_{G}(v^+) \cap A_i'| > c|A_i'|$. Using Claim~1.1 it is easy to see that $v_*$ has indegree
and outdegree at least $(3/8-\sqrt{c})\ell$.
Again, we can delete edges to arrive
at an oriented graph $R_A^*$ in which $v_*$ has indegree and outdegree at least
$(3/16-\sqrt{c})\ell \ge \ell/10$. Now Lemma \ref{find-cycle} applied to $R_A$
(playing the role of $R$), $\mc{C}_A$ (playing the role of $\mc{C}$),
$R^*_A$ (playing the role of $R^*$), $G_A^*$ (playing the role of $G^*$) and with
$1/|G^*_A|,\ell,\eps_0,d_0,d^2/2$ playing the roles of $\eps_0,k,\eps,d,c$
gives a directed Hamilton cycle $C_A$ in $G_A^*$.
Note that this yields a Hamilton path in $G_A'''$ which starts in
$v_*^+$ and ends in $v_*^-$, where $v^{+}_*$ and $v^{-}_*$ are the
successor and predecessor of $v_*$
on $C_A$. Now $v^+ C_B vv^{+}_* C_A v^{-}_*v^+$ is a Hamilton cycle in $G^*$ and thus in $G$.
This contradiction completes the analysis of Case~1.

Note that the argument in the proof of~Case~1 implies the following lemma, which will be
used in~\cite{chvatal} to prove an approximate analogue of Chv\'atal's theorem on
hamiltonian degree sequences for digraphs.

\begin{lemma}\label{cyclelemma}
Let $M',n_0$ be positive numbers and let $\varepsilon,d,\eta,\nu,\tau$ be positive constants such that
$1/n_0\ll 1/M'\ll  \varepsilon \ll d\ll \nu\le \tau\ll \eta< 1$. Let~$G$ be an oriented graph on $n\ge n_0$
vertices such that $\delta^0(G)\ge 2\eta n$. Let~$R'$ be the reduced digraph of $G$ with
parameters $(\varepsilon,d)$ and such that $|R'|\ge M'$. Suppose that there exists a spanning
oriented subgraph~$R$ of~$R'$ with $\delta^0(R)\ge \eta |R|$ and such that $|N^+_R(S)|\ge |S|+\nu|S|$ for all
sets $S\subseteq V(R)$ with $\tau |R|< |S|< (1-\tau)|R|$.
Then $G$ contains a Hamilton cycle.
\end{lemma}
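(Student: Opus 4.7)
The plan is to repeat the argument used to handle Case~1 in the proof of Theorem~\ref{main} almost verbatim, with the given reduced oriented graph~$R$ and its expansion hypothesis playing the role of the reduced graph constructed there from the $3n/8$ minimum semi-degree condition. The key observation is that Case~1 used the $3n/8$ bound only to deduce (a) a lower bound on $\delta^0(R)$, (b) the expansion property~(\ref{eq:expS}) of $R$, and (c) via Claim~1.2, a lower bound on the semi-degrees of the restriction of $G^*$ to a suitable sub-cluster; all three of these are supplied directly by the hypotheses of Lemma~\ref{cyclelemma}, with the constants $\eta$ and $\nu$ replacing $3/8$ and $d^2$.

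First I would verify that $R$ admits a $1$-factor $\mathcal{C}$ via Proposition~\ref{1-factor}: for $S$ with $|S|\leq\tau|R|$ or $|S|\geq(1-\tau)|R|$ the bound $|N^+_R(S)|\geq|S|$ follows from $\delta^0(R)\geq\eta|R|\gg\tau|R|$, while for intermediate sizes the expansion hypothesis suffices. Next I would repeat the random partition of $V(G)\setminus V_0$ into halves $A$ and $B$ (Claim~1.1), and super-regularize the edges of $\mathcal{C}$ on the $B$-side via Proposition~\ref{superreg}, producing $B_i'\subseteq B_i$ and the graph $G_B'$. The analogue of Claim~1.2 then produces sets $U_i\subseteq A_i$ of size $2\varepsilon|A_i|$ such that, after adding $U:=\bigcup U_i$ to the exceptional set and reattaching the enlarged exceptional set to obtain $V_0''$ and $G_A'$, we have $\delta^0(G_A')\geq d\eta n/3$; the constant $d\eta/3$ replaces $d/8$ from Case~1 because each regular pair in $R$ only has the density lower bound supplied by $\eta$ rather than $3/8$.

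Next I would apply the Diregularity Lemma and Lemma~\ref{red-orient} to $G_A'$ with finer parameters $\varepsilon_0\ll d_0\ll 1/M'$ to obtain a sub-reduced oriented graph $R_A$ on $[\ell]$ satisfying $\delta^0(R_A)\geq d\eta\ell/4$. The main technical step is the analogue of Claim~1.3: for every $T\subseteq[\ell]$ with $2d^2\ell<|T|<(1-2d^2)\ell$ we need $|N^+_{R_A}(T)|\geq |T|+\nu'\ell$ for some constant $\nu'>0$ depending only on $\nu$ and $d$. The argument follows Claim~1.3 line by line: define $L_i:=\{x\in[\ell]:A_x'\subseteq A_i\}$, set $S:=\{i\in[k]:|T\cap L_i|>d^3\ell/k\}$, apply the expansion hypothesis of $R$ to obtain $|N^+_R(S)|\geq|S|+\nu|S|$, and then use Lemma~\ref{red-orient}(c) together with the $\varepsilon$-regularity of the $(A_i,A_j)_{G_A'}$ to conclude that nearly all of each $L_j$ with $j\in N^+_R(S)$ lies in $N^+_{R_A}(T)$. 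The only change from Claim~1.3 is that $d^2k$ is replaced by $\nu|S|\gg d^3k$, so the argument goes through with (say) $\nu':=\nu/4$. The boundary cases when $|S|$ is small or nearly $k$ are handled by $\delta^0(R)\geq\eta|R|$ as in Case~1.

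Once $R_A$ has been shown to expand, I would obtain a $1$-factor $\mathcal{C}_A$ of $R_A$ via Proposition~\ref{1-factor}, super-regularize along $\mathcal{C}_A$ via Proposition~\ref{superreg}, and absorb the remainder into an exceptional set $A_0''$ of size at most $\varepsilon_0 n$. The endgame is then identical to Case~1: apply Lemma~\ref{find-cycle} once on the $B$-side (with $R$, $\mathcal{C}$, the super-regular partition $\{B_i'\}$, the auxiliary oriented graph $R_B^*$ built by attaching $A_0''$ via its significant neighbourhoods in the $B_i'$, and with $\nu$ playing the role of $c$) to obtain a Hamilton cycle $C_B$ in $G_B^*$; then split a vertex $v$ of $C_B$ into a new vertex $v_*$ attached to $G_A'''$ and apply Lemma~\ref{find-cycle} a second time on the $A$-side to obtain a Hamilton cycle $C_A$ through $v_*$; finally glue $C_A$ and $C_B$ at $v_*$ to produce a Hamilton cycle in $G$. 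The only obstacle I anticipate is the bookkeeping of the hierarchy so that the parameters introduced by the double application of the Diregularity Lemma are compatible with $\nu\leq\tau\ll\eta$; this is routine since every inequality used in Case~1 is essentially homogeneous in the reduced-graph parameters.
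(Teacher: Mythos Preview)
Your proposal is correct and takes essentially the same approach as the paper: the paper's ``proof'' of Lemma~\ref{cyclelemma} consists of the single sentence ``Note that the argument in the proof of Case~1 implies the following lemma,'' and the modifications you spell out (e.g.\ $\delta^0(G^*[A\setminus U])\ge d\eta n/3$, $\delta^0(R_A)\ge d\eta\ell/4$, $\nu$ playing the role of $c$ in Lemma~\ref{find-cycle}) match exactly the authors' own annotations scattered through Case~1. One small wording slip: it is not the density of the regular pairs that drops from $3/8$ to $\eta$, but the relative minimum semi-degree $\delta^0(R)/|R|$; your computation $d\eta n/3$ shows you understand this.
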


Let us now continue with the second case of the proof of Theorem~\ref{main}.

\medskip

\nib{Case 2.} \emph{There is a set $S \subset [k]$ with $k/3 < |S| < 2k/3$ and
$|N^+_R(S)| < |S| + 2ck$.}

\medskip

The strategy in this case is as follows: by exploiting the minimum semi-degree condition, we
will first show that $G$ has roughly a similar structure as the extremal example described in
Section~\ref{extremal}. In a sequence of further claims, we will then either find a Hamilton cycle
or obtain further structural information on $G$ which means that it must be
even more similar to the extremal example. Eventually, we arrive at a contradiction, since
being (almost) exactly like the extremal example is incompatible with the minimum semi-degree condition.
Unless stated otherwise, all neighbourhoods, degrees and numbers of edges refer to the oriented graph~$G$
from now on. Let
$$A_R := S \cap N^+_R(S), \ B_R := N^+_R(S) \sm S, \
C_R := [k] \sm (S \cup N^+_R(S)), \ D_R := S \sm N^+_R(S).
$$
Let $A := \bigcup_{i \in A_R} V_i$ and define $B,C,D$ similarly.
By definition we have $e_R(A_R,C_R)=e_R(A_R,D_R)=e_R(D_R,C_R)=e_R(D_R)=0$.
Since $R$ has parameters $(\eps^2/3,d)$, Lemma \ref{red-orient}(c,d) implies that we have
\begin{equation} \label{lowdens}
e(A,C), e(A,D), e(D,C), e(D) < 3dn^2.
\end{equation}
From now on we will
not calculate explicit constants multiplying $c$, and just write $O(c)$.
The constants implicit in the $O(*)$ notation will always be absolute.

\medskip

\nib{Claim 2.1.} $|A|, |B|, |C|, |D| = (1/4 \pm O(c))n$.

\begin{proof} First we show that $A_R$, $B_R$, $C_R$ and $D_R$ are non-empty.
Since the average value of $|N^+_R(x) \cap S|$ over all $x \in S$ is less than
$|S|/2$, we have
$$
|B_R| > \delta^+(R) - |S|/2 \stackrel{(\ref{eq:minR})}{>} (3/8 - 2d)k - k/3 > k/30.
$$
Also $|D_R| = |B_R| + |S| - |N^+_R(S)| > k/30 - 2ck > 0$.
Since $e_R(D_R)=0$, for any $x \in D_R$ we have
$|N_R(x)|\le  |A_R|+|B_R|+|C_R| = |N^+_R(S)| + |C_R|$.
Thus
$$
|C_R| \stackrel{(\ref{eq:minR})}{>}2(3/8-2d)k- |N^+_R(S)|> 2(3/8-2d)k-(2/3+2c)k > 0
$$
and also
$|A_R| = |C_R| + |N^+_R(S)| + |S| - k > 2(3/8-2d)k + |S| - k > 0$.

Pick a vertex $u_R \in D_R$ whose degree in~$R$ is minimal, a vertex
$v_R \in A_R$ whose outdegree in $R$ is minimal and a vertex $w_R \in C_R$ of whose indegree
in $R$ is minimal. Since the minima are at most the averages, inequality~(\ref{eq:minR})
implies that
$2(3/8-2d)k < d(u_R) \le |A_R|+|B_R|+|C_R|$,
$(3/8-2d)k < d^+(v_R) \le |A_R|/2+|B_R|$ and
$(3/8-2d)k < d^-(w_R) \le |B_R|+|C_R|/2$. We also have the inequality
$|B_R|-|D_R|=|N^+_R(S)|-|S| < 2ck$. Thus we may define positive reals
$r_A$, $r_B$, $r_C$, $r_D$ by
\begin{itemize}
\item $r_A := |A_R|/2+|B_R| - (3/8-2d)k$,
\item $r_B := \frac{3}{2}(|D_R|-|B_R|+2ck)$,
\item $r_C := |B_R|+|C_R|/2 - (3/8-2d)k$,
\item $r_D := |A_R|+|B_R|+|C_R| - 2(3/8-2d)k$.
\end{itemize}
Then
$$
r_A + r_B + r_C + r_D = \frac{3}{2}(|A_R|+|B_R|+|C_R|+|D_R|) + 3ck
- 4(3/8-2d)k = (3c+8d)k < 4ck.
$$
This in turn implies that
\begin{itemize}
\item $|D_R| = k - (|A_R|+|B_R|+|C_R|) = k - 2(3/8-2d)k - r_D
= k/4 \pm 5ck$,
\item $|B_R| = |D_R| + 2ck - \frac{2}{3}r_B = k/4 \pm 10ck$,
\item $|A_R| = 2((3/8-2d)k - |B_R| + r_A) = k/4 \pm 30ck$ and
\item $|C_R| = 2((3/8-2d)k - |B_R| + r_C) = k/4 \pm 30ck$.
\end{itemize}
Altogether, this gives
$|A|, |B|, |C|, |D| = (1/4 \pm 31c)n$.  \end{proof}

\medskip

\nib{Claim 2.2.} \begin{itemize}
\item $e(A)  > (1/2 - O(c))n^2/16$,
\item $e(A,B)  > (1 - O(c))n^2/16$,
\item $e(B,C) > (1 - O(c))n^2/16$,
\item $e(B,D)  > (1/2 - O(c))n^2/16$,
\item $e(C)  > (1/2 - O(c))n^2/16$,
\item $e(C,D)  > (1 - O(c))n^2/16$,
\item $e(D,A)  > (1 - O(c))n^2/16$,
\item $e(D,B)  > (1/2 - O(c))n^2/16$.
\end{itemize}

\begin{proof} Since $e(A,C), e(A,D) < 3dn^2$ by~(\ref{lowdens}) we have
$$
\sum_{x \in A} d^+(x) \le |A|^2/2 + |A||B| + 6dn^2 = (3/2+O(c))n^2/16.
$$
On the other hand, $\sum_{x \in A} d^+(x) \ge |A|(3n-4)/8 = (3/2-O(c))n^2/16$.
So we must have $e(A) > (1/2 - O(c))n^2/16$ and $e(A,B) > (1 - O(c))n^2/16$.
Also, since $e(A,C), e(D,C) < 3dn^2$ we have
$$(3/2-O(c))n^2/16 < \sum_{x \in C} d^-(x) < |B||C|+|C|^2/2+6dn^2
= (3/2+O(c))n^2/16,
$$ so $e(C) > (1/2 - O(c))n^2/16$ and
$e(B,C) > (1 - O(c))n^2/16$. Next, writing $\ov{D}:=A\cup B\cup C$ and using the inequalities
$e(D),e(D,C),e(A,D)<3dn^2$ gives
\begin{align*}
(3-O(c))n^2/16 & < e(D,\ov{D}) + e(\ov{D},D) +2e(D)\\
& < e(D,A)+ e(D,B) + e(B,D) + e(C,D) + 12dn^2 \\
& \le |D|(|A|+|B|+|C|) + 12dn^2
= (3+O(c))n^2/16,
\end{align*}
so $e(D,A) > (1 - O(c))n^2/16$ and $e(C,D) > (1 - O(c))n^2/16$.
Finally, since $e(D,C),e(D)<3dn^2$ we have
$$
(3/2-O(c))n^2/16 < \sum_{x \in D} d^+(x) < |A||D| + e(D,B)+ 6dn^2
<(1+O(c))n^2/16+e(D,B)
$$
and so $e(D,B) > (1/2 - O(c))n^2/16$.
Since $e(A,D),e(D)<3dn^2$ we have
$$
(3/2-O(c))n^2/16 < \sum_{x \in D} d^-(x) < e(B,D) + |C||D|+ 6dn^2
<(1+O(c))n^2/16+e(B,D)
$$
and so $e(B,D) > (1/2 - O(c))n^2/16$. \end{proof}

\medskip

Henceforth we will only use Claims~2.1 and~2.2 and make no further use of the information
in~\eqref{lowdens}. This has the advantage of making our picture invariant under
the relabelling $A \lra C$, $B \lra D$. Also, we add $V_0$ to $A$.
Since $|V_0| \le  (\eps^2/3) n$ and $\eps \ll c$, this does not affect the assertions of
Claims~2.1 and~2.2 (except as usual the constant in the $O(c)$-notation).
It will sometimes be convenient to use the notation
$A:=P(1)$, $B:=P(2)$, $C:=P(3)$, $D:=P(4)$.

Given a vertex $x\in G$, we will use the compact notation \emph{$x$ has property
$W \ns{:} A^*_* B^*_* C^*_* D^*_*$}
as follows. The notation starts with some $W \in \{A,B,C,D\}$, namely
the set that $x$ belongs to. Next,
for each of $A,B,C,D$ its superscript symbol refers to
the intersection with $N^+(x)$ and its subscript to the intersection
with $N^-(x)$. A symbol `$\ \ns{>} \alpha$' describes an intersection of
size at least $(\alpha-O(\sqrt{c}))n/4$ and a symbol `$\ \ns{<} \alpha$'
describes an intersection of size at most $(\alpha+O(\sqrt{c}))n/4$.
The absence of a symbol means that no statement is made
about that particular intersection, and we can omit any of $A,B,C,D$ if
it has no superscript or subscript. For example, to say
$x$ has property $B\ns{:}A^{>1}C^{<1/2}_{<1/3}$ means that
$x \in B$, $|N^+(x) \cap A| > (1-O(\sqrt{c}))n/4$,
$|N^+(x) \cap C| < (1/2+O(\sqrt{c}))n/4$ and $|N^-(x) \cap C| < (1/3+O(\sqrt{c}))n/4$.

We say that a vertex $x$ is \emph{cyclic} if it satisfies
$$P(i)\ns{:}P(i+1)^{>1}P(i-1)_{>1}
$$
for some $i\le 4$ (counting modulo 4). 
Claim 2.2 implies that at most $O(\sqrt{c})n$ vertices are
not cyclic. 
We say that a vertex is \emph{acceptable} if it has one of the following properties
\begin{itemize}
\item $A\ns{:}B^{>1/100}D_{>1/100}$, $A\ns{:}A^{>1/100}D_{>1/100}$,
$A\ns{:}A_{>1/100}B^{>1/100}$, $A\ns{:}A^{>1/100}_{>1/100}$,
\item $B\ns{:}A_{>1/100}C^{>1/100}$, $B\ns{:}A_{>1/100}D^{>1/100}$, $B\ns{:}C^{>1/100}D_{>1/100}$,
$B\ns{:}D^{>1/100}_{>1/100}$,
\item $C\ns{:}B_{>1/100}D^{>1/100}$,
$C\ns{:}B_{>1/100}C^{>1/100}$, $C\ns{:}C_{>1/100}D^{>1/100}$,
$C\ns{:}C^{>1/100}_{>1/100}$,
\item $D\ns{:}A^{>1/100}C_{>1/100}$, $D\ns{:}A^{>1/100}B_{>1/100}$, $D\ns{:}B^{>1/100}C_{>1/100}$,
$D\ns{:}B^{>1/100}_{>1/100}$.
\end{itemize}
In other words, a vertex is acceptable if it has a significant outneighbourhood in one of its
two {\em out-classes} and a significant inneighbourhood in one of its two {\em in-classes},
where `out-classes' and `in-classes' are to be understood with reference to the
extremal oriented graph in Section~\ref{extremal}. So for example $A$ has out-classes $A$
and $B$ and in-classes $A$ and $D$.
We will also call an edge {\em acceptable} if it is of the type allowed in the extremal
oriented graph (so for example an edge from $A$ to $B$ is acceptable but an edge from $B$ to $A$ is not).
Note that a cyclic vertex is also acceptable.

In what follows, we will carry out
$O(\sqrt{c})n$ reassignments of
vertices between the sets $A$, $B$, $C$ and $D$ (and a similar number of `path contractions' as
well). After each sequence of such reassignments
it is understood that the hidden constant in the $O(\sqrt{c})$-notation of the definition
of an acceptable/cyclic vertex is increased.

\medskip

\nib{Claim 2.3.} \emph{By reassigning vertices that are not cyclic to
$A$, $B$, $C$ or $D$ we can arrange that every vertex of~$G$ is acceptable. We can
also arrange that there is no vertex that is not cyclic but would
become so if it was reassigned.}

\begin{proof} We start by making any reassignments necessary to
satisfy the second statement of the claim. (To do this, we reassign all the vertices in
question in one step. As we are reassigning $O(\sqrt{c})n$ vertices we can then change
the hidden constant in the $O(\sqrt{c})$-notation to make sure that the vertices
which were cyclic before are still cyclic.)

To satisfy the first statement of the claim, for any vertex $x\in G$ we
let $P^+_x := \{1 \le i \le 4: |N^+(x) \cap P(i)| > n/400 \}$,
$P^-_x := \{1 \le i \le 4: |N^-(x) \cap P(i)| > n/400 \}$, and
$P_x := P^+_x \cup P^-_x$. By the minimum semi-degree condition
$|P^+_x| \ge 2$, $|P^-_x| \ge 2$ and $|P_x| \ge 3$. If there is
some $i$ such that $i+1 \in P^+_x$ and $i-1 \in P^-_x$ (where we use addition and
subtraction mod $4$) then we can put $x$ into $P(i)$ and it will have
property $P(i)\ns{:}P(i-1)_{>1/100}P(i+1)^{>1/100}$, i.e.~$x$ will become
acceptable. Otherwise we must
have either $P^+_x = \{1,3\}$ and $P^-_x = \{2,4\}$, or
$P^-_x = \{1,3\}$ and $P^+_x = \{2,4\}$. In either case we can put $x$
into $A=P(1)$: in the first case it will have property $A\ns{:}A^{>1/100}D_{>1/100}$
and in the second case property $A\ns{:}A_{>1/100}B^{>1/100}$. So in both cases~$x$
will become acceptable. As before, by increasing
the hidden constant in the $O(\sqrt{c})$-notation if necessary, we can ensure that
the properties of all other vertices are maintained. \end{proof}

\medskip

In the proof of the following claims, contractions of paths will play an
important role. Given a path $P$ in $G$ whose initial vertex $p_1$ and final vertex
$p_2$ lie in the same class $P(i)$, the \emph{contraction of $P$} yields the following oriented
graph~$H$: we add a new vertex $p$ to the class $P(i)$ and remove (the vertices of) the path $P$ from $G$.
$N^+_H(p)$ will be $N^+(p_2)\cap P(i+1)$ and $N^-_H(p)$ will be $N^-(p_1)\cap P(i-1)$.
Note that any cycle in $H$ which includes $p$ corresponds to a cycle in $G$ which
includes $P$. The paths $P$ that we use in the proof of Claim 2.4 will have
initial and final vertices in the same class and will be {\em acceptable},
meaning that every edge on~$P$ is acceptable. Note that each such acceptable path~$P$ must be
\emph{$BD$-balanced}, meaning that if we delete the initial vertex of $P$
we are left with a path that meets $B$ and $D$ the same number of times.
This may be seen from the observations that visits of $P$ to $B \cup D$ alternate
between $B$ and $D$, and if the path is in $A$ and then leaves it must visit
$B$ and $D$ an equal number of times before returning to $A$ (and similarly for $C$).
Note that if we have $|B|=|D|$ and contract a $BD$-balanced path, then the resulting digraph
will still satisfy $|B|=|D|$. The `moreover' part of the following claim is used later in the proof
to turn a graph with $|B|=|D|+1$ into one with $|B|=|D|$ under certain circumstances.

\medskip

\nib{Claim 2.4.} \emph{If $|B|=|D|$ and every vertex is acceptable
then $G$ has a Hamilton cycle.
Moreover, the assertion also holds under the following slightly weaker assumption:
$|B|=|D|$ and there exists some vertex $x$ such that every vertex except~$x$ is acceptable,
there is at least one acceptable edge going into $x$ and
at least one acceptable edge coming out of $x$.}

\begin{proof} We will use the `standard' version of the Blow-up Lemma (Lemma~\ref{standardblowup})
to prove the first assertion.
For this, the idea is to first find suitable paths which together contain all the non-cyclic vertices.
We will contract these paths into vertices so that the resulting oriented graph $G_1$ consists
entirely of cyclic vertices.
Then we will find suitable paths whose contraction results in an oriented graph $G_2$ which
satisfies $|A|=|B|=|C|=|D|$ and all of whose vertices are cyclic.
We can then apply the Blow-up Lemma to its underlying graph to find a directed Hamilton cycle in~$G_2$
which `winds around' $A,B,C,D$.
The choice of our paths will then imply that this Hamilton cycle corresponds to one in~$G$.

Let $v_1,\dots,v_t$ be the vertices which are acceptable but not cyclic.
For each $v_i$, choose a cyclic outneighbour $v_i^+$
and a cyclic inneighbour $v_i^-$ so that all of these vertices are distinct
and so that the edges $v_iv^+_i$ and $v^-_iv_i$ are acceptable.
Note that this can be done since $t=O(\sqrt{c})n$.
Let $P_i'$ be a path of length at most 3 starting at $v_i^+$ and ending at a
cyclic vertex which lies in the same class as $v_i^-$, and where the successive vertices lie in
successive classes, i.e.~the successor of a vertex $x \in V(P) \cap P(i)$ lies in $P(i+1)$.
(So if for example $v_i$ has the first of the
acceptable properties of a vertex in~$A$, then we can choose $v^+_i\in B$, $v^-_i\in D$ and
so $P'_i$ would have its final vertex in $D$. Also, if $v_i^-$ and $v_i^+$ lie in the same class
then $P_i'$ consists of the single vertex $v_i^+$.)
Again, the paths $P_i'$ can be chosen to be disjoint. Let $P_i:=v_i^-v_iv_i^+P_i'$.
Then the $P_i$ are acceptable and thus $BD$-balanced.
Let $G_1$ be the oriented graph obtained from $G$ by contracting the paths $P_i$.
Then every vertex of $G_1$ is cyclic
(by changing the constant involved in the $O(\sqrt{c})$ notation in the definition of
a cyclic vertex if necessary). Moreover, the sets $A,B,C,D$ in $G_1$ still satisfy $|B|=|D|$
and we still have that the sizes of the other pairs of sets differ by at most $O(\sqrt{c})n$.

Now suppose that $|A| < |C|$ and let $s:=|C|-|A|$. Greedily find a path $P_C$ of
the form
$$\underbrace{CCDAB \dots CCDAB}_{s \text{ times}}C$$
consisting entirely of cyclic vertices. So~$P_C$ starts with an edge between two cyclic vertices in~$C$.
(Claim~2.2 implies that almost all (unordered) pairs of (cyclic) vertices in~$C$ are
joined by an edge.) Then the path~$P_C$ uses one cyclic vertex in~$D$, one in~$A$ etc.  
Let $G_2$ be the digraph obtained by contracting $P_C$.
Then in $G_2$, we have $|A|=|C|$ and $|B|=|D|$. If $|A|>|C|$ we can achieve equality in the similar way
by contracting a path~$P_A$ of the form~$AABCD\dots AABCDA$. Note that since $s=O(\sqrt{c})n$,
all vertices of $G_2$ are still cyclic.
Now suppose that in $G_2$ we have $|B|>|A|$. Let $s:=|B|-|A|$.
This time, we greedily find a path $P_B$ of the form
$$\underbrace{BDABCDBCDA \dots BDABCDBCDA}_{s \text{ times}}B$$
consisting of cyclic vertices. To see that such a path exists, note that Claim~2.2 implies
that there are at least~$|B|/4$ (say) cyclic vertices in~$B$ with at least~$|D|/4$ cyclic
outneighbours in~$D$. So~$P_B$ starts in such a cyclic vertex in~$B$ and then uses cyclic
vertices in~$D$, $A$, $B$ and~$C$. Since there are at least~$|D|/4$
cyclic vertices in~$D$ having at least~$|B|/4$ cyclic outneighbours in~$B$, $P_C$ can then
move to such a cyclic vertex in~$D$ and use cyclic vertices in~$B$, $C$, $D$ and~$A$ etc. 
Note that $P_B$ is $BD$-balanced.
By contracting $P_B$, we obtain an oriented graph (which we still call $G_2$) with
$|A|=|B|=|C|=|D|$ and all of whose vertices are still cyclic.
Finally, suppose that in $G_2$ we have $|B|<|A|$. In this case we can equalize the sets
by contracting two paths~$P_A$ and~$P_C$ as above.

Thus we have arranged that $|A|=|B|=|C|=|D|$ in~$G_2$.
Let $G_2'$ be the underlying graph corresponding to  the set of edges oriented from $P(i)$ to $P(i+1)$, for
$1 \le i \le 4$.
Since all vertices of $G_2$ are cyclic and we chose $c \ll \eta \ll 1$, each
pair $(P(i),P(i+1))$ is $(\eta,1)$-super-regular in $G'_2$.
Also, $G_2'$ is simple, i.e.~there are no multiple edges.
Let $F'$ be the $4$-partite graph with vertex classes
$A=P(1),B=P(2),C=P(3),D=P(4)$ where the 4 bipartite graphs induced by $(P(i),P(i+1))$ are all complete.
Clearly $F'$ has a Hamilton cycle, so we can apply Lemma~\ref{standardblowup}
with $k=4$, $\Delta=2$ to find a Hamilton cycle $C_{\rm Ham}$ in~$G_2'$. The `moreover' part of
Lemma~\ref{standardblowup} implies that we can assume that~$C_{\rm Ham}$
continually `winds around' $A,B,C,D$, i.e.~one neighbour on $C_{\rm Ham}$
of a vertex $x \in P(i)$ lies in $P(i+1)$
and the other in $P(i-1)$. $C_{\rm Ham}$ corresponds to a
directed Hamilton cycle in $G_2$ and thus in turn to a directed Hamilton cycle in $G$.

Now, we deduce the `moreover' part from the first part of Claim~2.4.
Similarly as in the first part, the approach is to find a suitable path~$P$
containing~$x$ which we can contract into a single vertex so that the resulting oriented graph
still satisfies $|B|=|D|$ and now all of its vertices are acceptable.
Choose $x^-$ and $x^+$ so that $x^-x$ and $xx^+$ are acceptable edges.
Since $x^-$ is acceptable, it has a cyclic inneighbour $x^{--}$ so that the edge
$x^{--}x^-$ is acceptable. Let $P(i)$ be the class which contains $x^{--}$.
Let $P'$ be a path of length at most~3 starting at $x^+$ and ending at an
cyclic vertex in~$P(i)$
so that successive vertices lie in successive classes.
Let $P:=x^{--}x^-xx^+P'$. Then $P$ is acceptable and thus $BD$-balanced.
Let $H$ be the oriented graph obtained from $G$ by contracting $P$.
Then in $H$ we still have $|B|=|D|$. All vertices that were previously acceptable/cyclic
are still so (possibly with a larger error term $O(\sqrt{c})$).
Since $x^{--}$ and the terminal vertex of $P$ are both cyclic (and thus acceptable),
this means that the new vertex resulting
from the contraction of $P$ is still acceptable.
So we can apply the first part of the claim to obtain a Hamilton cycle in $H$ which clearly
corresponds to one in the original oriented graph $G$.
\end{proof}

The picture is still invariant under the relabelling
$A \lra C$, $B \lra D$, so we may assume that $|B| \ge |D|$.
Since we are assuming there is no Hamilton cycle, Claim~2.4 gives
$|B| > |D|$.

\medskip

\nib{Claim 2.5.} \emph{For each of the properties
$A\ns{:}C_{>1/100}$, $A\ns{:}B_{>1/100}$,
$C\ns{:}A^{>1/100}$, $C\ns{:}B^{>1/100}$,
there are less than $|B|-|D|$ vertices with that property.}

\begin{proof} Suppose to the contrary that for example we can find $|B|-|D|=:t$ vertices
$v_1, \dots, v_t$ in $A$ having the property $A\ns{:}C_{>1/100}$.
Select distinct cyclic vertices
$v_1^-, \dots, v_t^-$ in $C$ with $v_i^- \in N^-(v_i)$ for $1 \le i \le t$.
This can be achieved greedily, as $|B|-|D|=O(\sqrt{c})n < n/1000$, say.
Since $v_1, \dots, v_t$ are acceptable vertices in $A$ we can greedily select
distinct cyclic vertices $v_i^+ \in N^+(v_i) \cap (A \cup B)$.
For each $i$ such that $v_i^+ \in B$ let
$b_i^+:= v_i^+$. For each $i$ such that $v_i^+\in A$
select $b_i^+$ to be a cyclic vertex in $N^+(v_i^+) \cap B$;
we can ensure that $b_1^+, \dots, b_t^+$ are distinct. Similarly
we can select cyclic vertices $b_i^{-} \in N^-(v_i^-) \cap B$ which are
distinct from each other and from $b_1^+, \dots, b_t^+$. Thus we have constructed
$t=|B|-|D|$ vertex disjoint paths $P_1, \dots, P_t$ where $P_i$ starts at
$b_i^-$ in $B$, goes through~$C$, then it uses~$1$ or $2$ vertices of $A$, and it ends
at $b_i^+$ in $B$.
Consider a new oriented graph $H$, obtained from $G$ by contracting these paths.
Note that the vertices of~$H$ correponding to the paths~$P_i$ are acceptable and the analogues
of~$B$ and~$D$ in~$H$ now have equal size. So we can apply Claim~2.4 to find a Hamilton cycle in~$H$.
This corresponds to a Hamilton cycle in $G$, which
contradicts our assumption. Therefore there are less than $|B|-|D|$ vertices
with property $A\ns{:}C_{>1/100}$.

The statement for property $C\ns{:}A^{>1/100}$ follows in a similar
way, as we can again start by finding a matching of size $|B|-|D|$
consisting of edges directed from $C$ to $A$.
The arguments for the other two properties are also similar.
For instance, if we have $|B|-|D|$ vertices $v_1,\dots,v_t$ in
$A$ having property $A\ns{:}B_{>1/100}$ we can find
a matching $b_1^- v_1, \dots ,b_t^-v_t$ of edges directed from
$B$ to $A$ such that the $b^-_i$ are cyclic. We can extend this to $|B|-|D|$ vertex disjoint
paths $P_1, \dots, P_t$, where $P_i$ starts with the edge $b_i^- v_i$,
it then goes either directly to a cyclic vertex in $B$ or it uses one more
vertex from~$A$ before it ends at a cyclic vertex in $B$.%
\COMMENT{previous version had also a backwards extension from $v_i^-$ but doesnt seem to be necessary}
Now we construct a new oriented graph~$H$ similarly as before
and find a Hamilton cycle in $H$ and
then in~$G$. \end{proof}

We now say that a vertex is \emph{good} if it is acceptable, and also has one of the
properties
$$
A\ns{:}B_{<1/100}C_{<1/100}, \ \
B\ns{:}A^{<1/100}B^{<1/100}_{<1/100}C_{<1/100}, \ \
C\ns{:}A^{<1/100}B^{<1/100} \ \mbox{ or } \
D\ns{:}$$
(The last option means that every acceptable vertex in $D$ is automatically good.)
Note that a cyclic vertex is not necessarily good.

\medskip

\nib{Claim 2.6.} \emph{By reassigning at most $O(\sqrt{c})n$ vertices we can arrange that
every vertex is good.}


\begin{proof} While $|B|>|D|$ we reassign vertices that are bad (i.e.~not good)
as follows. Suppose there is a bad vertex $x \in B \cup C$ with
$|N^+(x) \cap A| > n/400$. If we also have
$|N^-(x) \cap C| > n/400$ or $|N^-(x) \cap B| > n/400$,
then by reassigning $x$ to~$D$ it will become a good vertex.
(Recall that for a vertex in $D$, being acceptable and good is the same.)
If not, then  $|N^-(x) \cap C|, |N^-(x) \cap B| \le n/400$,
and so $|N^-(x) \cap A| \ge n/400$.
In this case we can make~$x$ good by reassigning it to~$A$.
Exactly the same argument works%
    \COMMENT{Its really the same argument: we put $x$ into~$D$ if
$|N^-(x)\cap C|\ge n/400$ or $|N^-(x)\cap B|\ge n/400$ and put it into~$A$ otherwise.}
if there is a bad vertex $x \in B \cup C$
with $|N^+(x) \cap B| > n/400$.
If $x$ was in $B$ we have decreased $|B|-|D|$ by $1$ or $2$, whereas by Claim~2.5
we will make at most $O(\sqrt{c})n$ such reassignments of vertices
$x\in C$.

Similarly, suppose there is a bad vertex $x \in A \cup B$ with
$|N^-(x) \cap B| > n/400$ or $|N^-(x) \cap C| > n/400$.
If we also have
$|N^+(x) \cap A| > n/400$ or $|N^+(x) \cap B| > n/400$, then by
reassigning~$x$ to~$D$ it becomes good. If not, then
$|N^+(x) \cap A|, |N^+(x) \cap B| \le n/400$,
and so $|N^+(x) \cap C|, |N^+(x) \cap D| \ge n/400$.
In this case we can reassign $x$ to~$C$. Again, if $x$ was
in $B$ we have decreased $|B|-|D|$ by $1$ or $2$, whereas by Claim~2.5
we will make at most $O(\sqrt{c})n$ such reassignments of vertices
$x\in A$.

The above cases cover all possibilities of a vertex being bad.
If during this process $|B|$ and $|D|$ become equal we can
apply Claim~2.4 to find a Hamilton cycle. Alternatively,
it may happen that $|B|-|D|$ goes from $+1$ to $-1$ if a vertex $x$
is moved from $B$ to $D$. In this case we claim that we can put
$x$ into $A$ or $C$ to achieve the following property: every
vertex except $x$ is acceptable,
there is at least one acceptable edge going into $x$ and
at least one acceptable edge coming out of $x$.

To see this, suppose first that one of $N^-(x) \cap A$ or $N^-(x) \cap D$
is non-empty. If one of $N^+(x) \cap A$ or $N^+(x) \cap B$ is
also non-empty we can put $x$ into~$A$.
Otherwise $N^+(x) \cap C$ and $N^+(x) \cap D$ are both non-empty. Now we
can put $x$ into~$C$ unless
$N^-(x) \cap B$ and $N^-(x) \cap C$ are both empty, which cannot
happen, as then $x$ would not have been a bad vertex of $B$. Now suppose that
$N^-(x) \cap A$ and $N^-(x) \cap D$ are both empty, so
$N^-(x) \cap B$ and $N^-(x) \cap C$ are both non-empty.
We can put $x$ into~$C$ unless
$N^+(x) \cap C$ and $N^+(x) \cap D$ are both empty. But this implies that~$x$
satisfies the property $B\ns{:}A^{>1}B^{>1/2}_{>1/2}C_{>1}$. This cannot
happen, as then $x$ would have been reassigned to $D$ as
a cyclic vertex in Claim~2.3. Therefore we can
apply the `moreover' part of Claim~2.4 to find a Hamilton cycle. But since we are
assuming there is no Hamilton cycle the process must terminate
with an assignment of vertices where all vertices are good. \end{proof}

\medskip

Let $M$ be a maximum matching consisting of edges in $E(B,A) \cup E(B)
\cup E(C,A) \cup E(C,B)$. Say that $M \cap E(B,A)$ matches $B_A \subset B$ with
$A_B \subset A$, that $M \cap E(B)$ is a matching on
$B_B \subset B$, that $M \cap E(C,A)$ matches $C_A \subset C$ with $A_C \subset A$
and that $M \cap E(C,B)$ matches $C_B \subset C$ with $B_C \subset B$.
Note that $e(M)=|A_B| + |A_C| + |B_B|/2 + |B_C|$.

We must have $e(M) < |B| - |D|$. Otherwise, by a similar
argument to that in Claim 2.5 we could extend
$t:=|B|-|D|$ edges $v_i^-v_i$ of $M$ to $|B|-|D|$ vertex disjoint
paths $P_1, \dots, P_t$, where $P_i$ includes $v_i^- v_i$,
starts and ends at cyclic vertices $b_i^-$, $b_i^+$ in $B$,
and uses two more vertices from $B$ than from $D$. Then
we would find a Hamilton cycle as in Claim 2.5, which would
be a contradiction.

\medskip

\nib{Claim 2.7.} \emph{$e(M)=0$ and $|B|-|D|=1$.}

\begin{proof} We will first prove that $e(M)=0$.
Assume to the contrary that $e(M) \ge 1$. Let $A' := A \sm (A_B \cup A_C)$,
$B' := B \sm (B_A \cup B_B \cup B_C)$ and $C' := C \sm (C_A \cup C_B)$.
By the maximality of $M$ there are no edges from $B' \cup C'$ to $A'$
or from $B' \cup C'$ to $B'$. Since all vertices are good this implies%
   \COMMENT{Potentially $e(M)=|C_A|=|A_C|$.}
\begin{align} \label{eAC}
e(C,A) & \le e(C_A \cup C_B,A)+e(C,A_B \cup A_C)
\le (|C_A \cup C_B|+|A_B \cup A_C|)(1/100+O(\sqrt{c})n/4\nonumber\\
& \le 2e(M)(n/4)/99\le e(M)|A|/49.
\end{align}
Similarly%
     \COMMENT{$e(B,A)\le e(B_A\cup B_B \cup B_C,A)+e(B,A_B\cup A_C)\le
(|B_A\cup B_B \cup B_C|+|A_B\cup A_C|)(n/4)/99\le 2e(M)(n/4)/99$}
$e(B,A) < e(M)|A|/49$. So by considering
the average indegree of the vertices in $A$ we can find a vertex $a\in A$
such that
$$d^-(a) < |A|/2 + |D| + 2e(M)/49.$$
Also, we can obtain $e(C,B) < e(M)|C|/49$ in a similar way as~(\ref{eAC}).
So by averaging, we can find a vertex $c \in C$ with
$$d^+(c) < |C|/2 +  |D| + 2e(M)/49.$$
By the maximality of~$M$, every edge in $B$ is incident with a vertex of $B_A \cup B_B \cup B_C$.
Together with the fact that every vertex is good, this implies that%
    \COMMENT{Could have $|B_B|=2e(M)$ and about $|B|/100$ in- and out- edges on each vertex}
$$e(B) < |B_A \cup B_B \cup B_C| \cdot 2(1/100+O(\sqrt{c}))n/4 < 2e(M)|B|/49.$$
So we can find $b \in B$ with
$$d(b) < |A|+|C|+|D|+4e(M)/49.$$
Now we use the minimum semi-degree condition and the above inequalities
to get
\begin{equation} \label{dadcdb}
\frac{3n-4}{2} \le d^-(a)+d^+(c)+d(b) < \frac{3}{2}(|A|+|C|+2|D|) + 8e(M)/49
\end{equation}
Substituting $n=|A|+|B|+|C|+|D|$ and simplifying%
     \COMMENT{$(3n-4)/2\le \frac{3}{2}(n+|D|-|B|)+8e(M)/49$}
gives
\begin{equation} \label{B-D}
|B|-|D| \le (2/3)(2+8e(M)/49)=4/3+16e(M)/147.
\end{equation}
On the other hand, we previously observed that $|B|-|D| \ge e(M)+1$.
Combining this gives $131e(M)/147 \le 1/3$, which is only possible if $e(M)=0$.
Now~(\ref{B-D}) immediately implies the claim.
\end{proof}

Note that $e(M)=0$ implies that $e(B \cup C,A)=0$.
So by averaging, there is a vertex $a \in A$ with
$d^-(a) \le (|A|-1)/2+|D|$. Since $e(M)=0$ also implies that $e(C,B)=0$ and $e(B)=0$
there is vertex $c \in C$ with
$d^+(c) \le (|C|-1)/2+|D|$ and a vertex $b \in B$
with $d(b) \le |A|+|C|+|D|$.
So as in~(\ref{dadcdb}), using $|D|=|B|-1$ we obtain that
$$
\frac{3n-4}{2} \le d^-(a)+d^+(c)+d(b) \le \frac{3}{2}(|A|+|C|+2|D|)-1=
\frac{3}{2}(n-1)-1,
$$
which is impossible. This contradiction to our initial assumption
on $G$ completes the proof. \qed

As remarked in the introduction, the proof can be modified to find a cycle of any given length~$\ell$
through any given vertex~$v$ of~$G$, 
where $\ell \ge n/10^{10}$, say. We give an outline of the necessary modifications here, the
details can be found in~\cite{lukethesis}.
In Case~1, we find the cycle as follows: consider a random subset $Q$ of $\ell-1$ vertices of~$G-v$
obtained by choosing $(\ell-1)/|R|$ vertices from each cluster $V_i$. 
With high probability, the oriented subgraph $G_Q$ induced by $Q \cup \{ v \}$ also has $R$ 
as a reduced oriented graph (with slightly worse parameters). 
Thus Lemma~\ref{cyclelemma} implies that $G_Q$ contains a Hamilton cycle.
So suppose we are in Case~2. 
The only significant difference in the argument is in the application of the Blow-up Lemma in 
the proof of Claim~2.4. After contracting paths so that all vertices are now acceptable
and $|A|=|B|=|C|=|D|$, we then 
find a single path $P_1$ which contains all the vertices which correspond to 
contracted paths and also contains~$v$. 
We then apply the Blow-up Lemma to find a path $P_2$ which (i) contains all but $n-\ell$
of the remaining vertices, (ii) whose initial vertex lies in the outneighbourhood of the final 
vertex of $P_1$ and (iii) whose final vertex lies in the inneighbourhood of the initial vertex
of~$P_1$. Then $P_1\cup P_2$ corresponds to a cycle of length~$\ell$ through~$v$ in~$G$.

\medskip

{\footnotesize \obeylines \parindent=0pt

\begin{tabular}{lll}

Peter Keevash                       &\ &  Daniela K\"{u}hn \& Deryk Osthus \\
School of Mathematical Sciences     &\ &  School of Mathematics \\
Queen Mary, University of London    &\ &  University of Birmingham \\
Mile End Road                       &\ &  Edgbaston \\
London                              &\ &  Birmingham \\
E1 4NS                              &\ &  B15 2TT \\
UK                                  &\ &  UK \\

\end{tabular}
}

{\footnotesize \parindent=0pt

\it{E-mail addresses}:
\tt{p.keevash@qmul.ac.uk}, \tt{\{kuehn,osthus\}@maths.bham.ac.uk}}

\end{document}